\title[Flat $S^1$-bundle]
{
Remarks on 
flat $S^1$-bundles,
$C^\infty$ vs $C^\omega$ 
}
\author{Teruaki Kitano}
\address{Department of information Systems Science, 
Faculty of Science and Engineering,
Soka University, 
Tangi-machi 1-236, 
Hachioji, Tokyo, 192-8577, Japan}
\email{kitano@soka.ac.jp}
\author{Yoshihiko Mitsumatsu}
\address{Department of Mathematics, Chuo University, 
1-13-27 Kasuga Bunkyo-ku, 
Tokyo, 112-8551, Japan}
\email{yoshi@math.chuo-u.ac.jp}
\author{Shigeyuki Morita}
\address{Graduate School of Mathematical Sciences, 
The University of Tokyo, 
3-8-1 Komaba, 
Meguro-ku, Tokyo, 153-8914, Japan}
\email{morita@ms.u-tokyo.ac.jp}
\subjclass[2010]{Primary~55R40, 57R32
}
\keywords{flat $S^1$-bundle, Euler class,
Haefliger $\Gamma$-structure, 
Mather-Thurston theory, 
Borel construction
}
\newtheorem{thm}{Theorem}[section]
\newtheorem{prop}[thm]{Proposition}
\newtheorem{cor}[thm]{Corollary}
\theoremstyle{definition}
\newtheorem{definition}[thm]{Definition}
\newtheorem{example}[thm]{Example}
\newtheorem{remark}[thm]{Remark}
\newtheorem{problem}[thm]{Problem}
\begin{document}

\newcommand{\Mg}{\mathcal{M}_g}
\newcommand{\Mgp}{\mathcal{M}_{g,\ast}}
\newcommand{\Mgb}{\mathcal{M}_{g,1}}

\newcommand{\hg}{\mathfrak{h}_{g,1}}
\newcommand{\ag}{\mathfrak{a}_g}
\newcommand{\Ln}{\mathcal{L}_n}

\newcommand{\Sg}{\Sigma_g}
\newcommand{\Sgb}{\Sigma_{g,1}}
\newcommand{\la}{\lambda}

\newcommand{\Symp}[1]{Sp(2g,\mathbb{#1})}
\newcommand{\symp}[1]{\mathfrak{sp}(2g,\mathbb{#1})}
\newcommand{\gl}[1]{\mathfrak{gl}(n,\mathbb{#1})}

\newcommand{\At}[1]{\mathcal{A}_{#1}^t (H)}
\newcommand{\Hq}{H_{\mathbb{Q}}}

\newcommand{\Ker}{\mathop{\mathrm{Ker}}\nolimits}
\newcommand{\Hom}{\mathop{\mathrm{Hom}}\nolimits}
\renewcommand{\Im}{\mathop{\mathrm{Im}}\nolimits}

\newcommand{\Der}{\mathop{\mathrm{Der}}\nolimits}
\newcommand{\Out}{\mathop{\mathrm{Out}}\nolimits}
\newcommand{\Aut}{\mathop{\mathrm{Aut}}\nolimits}
\newcommand{\Q}{\mathbb{Q}}
\newcommand{\Z}{\mathbb{Z}}
\newcommand{\R}{\mathbb{R}}
\newcommand{\C}{\mathbb{C}}

\begin{abstract}
We describe 
low dimensional homology groups of 
$\mathrm{Diff}^\delta_+S^1$ 
in terms of Haefliger's classifying space $B\overline{\Gamma}_1$
by 
applying a theorem of Thurston.
Then we consider the question whether some power of the rational
Euler class vanishes for real analytic flat $S^1$-bundles.
We show that if it occurs, then
the homology
group 
of $\mathrm{Diff}_+^{\omega,\delta} S^1$
should contain
two kinds of
many torsion classes which vanish in $\mathrm{Diff}^\delta_+S^1$. 
This is an informal note on our discussions about the above question (see Remark 1.17).
\end{abstract}

\renewcommand\baselinestretch{1.1}
\setlength{\baselineskip}{16pt}

\newcounter{fig}
\setcounter{fig}{0}

\maketitle

\vspace{1mm}

\section{Results}

Let $\mathrm{Diff}_+S^1$ be the orientation preserving $C^\infty$ diffeomorphism group
of the circle with the smooth topology 
and let $\mathrm{Diff}^\delta_+S^1$ denote the same group 
equipped with the {\it discrete} topology. 
Then there is a fibration
$$
B\overline{\mathrm{Diff}}_+ S^1\rightarrow B\mathrm{Diff}^\delta_+S^1
\rightarrow B\mathrm{Diff}_+S^1
$$
where $B\mathrm{Diff}^\delta_+S^1$ is the classifying space for 
flat $S^1$-bundles
while $B\overline{\mathrm{Diff}}_+ S^1$ is the classifying space for 
flat $S^1$-products. Since $\mathrm{Diff}_+S^1$ is homotopy equivalent to $\mathrm{SO}(2)$,
if we denote by $\widetilde{\mathrm{Diff}}_+S^1$ the universal covering group of
$\mathrm{Diff}_+S^1$, then $B\widetilde{\mathrm{Diff}}^\delta_+S^1$ can serve as
$B\overline{\mathrm{Diff}}_+ S^1$.
As is well known,
there are natural identification
$$
\widetilde{\mathrm{Diff}}_+S^1\cong \{f\in \mathrm{Diff}_+^\infty \R; Tf=fT\}\quad
\text{where $T(x)=x+1\ (x\in \R)$}
$$
and a cental extension
\begin{equation}
0\rightarrow \Z\rightarrow\widetilde{\mathrm{Diff}}_+S^1\overset{p}{\rightarrow}
\mathrm{Diff}_+S^1\rightarrow 1.
\label{eq:ce}
\end{equation}

Now let us recall a theorem of Thurston which says that 
$B\overline{\mathrm{Diff}}_+ S^1$ and hence $B\widetilde{\mathrm{Diff}}^\delta_+ S^1$
is homologically equivalent to the free loop space $\wedge B\overline{\Gamma}_1$
of Haefliger's classifying space $B\overline{\Gamma}_1$ (\cite{MR0285027,MR100269}).

\begin{thm}[Thurston \cite{MR339267}]
Let 
$h:
B\widetilde{\mathrm{Diff}}^\delta_+ S^1\times S^1 \rightarrow B\overline{\Gamma}_1
$
be the classifying map for the flat $S^1$-product over
$B\widetilde{\mathrm{Diff}}^\delta_+ S^1$.
Then its adjoint mapping
$$
H: 
B\widetilde{\mathrm{Diff}}^\delta_+ S^1\rightarrow \wedge B\overline{\Gamma}_1
$$
induces isomorphism on homology.
\label{th:Thurston}
\end{thm}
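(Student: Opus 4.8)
The plan is to obtain this as the instance of the Mather--Thurston theorem for the closed $1$-manifold $S^1$. First I would unwind the two sides. As recalled above, $B\widetilde{\mathrm{Diff}}^\delta_+ S^1$ serves as the classifying space $B\overline{\mathrm{Diff}}_+S^1$ of flat $S^1$-products, that is, as the homotopy fibre of $B\mathrm{Diff}^\delta_+S^1\to B\mathrm{Diff}_+S^1$, and over it sits the universal flat $S^1$-product. Its total space carries a codimension-one Haefliger structure transverse to the $S^1$-fibres whose normal bundle is the tangent bundle along the fibres; this structure is classified by the map $h$ of the statement, and $H$ is its adjoint. On the target side, because $TS^1$ is trivial the $B\overline{\Gamma}_1$-bundle over $S^1$ produced by the Mather--Thurston construction is the product $S^1\times B\overline{\Gamma}_1$, whose space of sections is precisely $\mathrm{Map}(S^1,B\overline{\Gamma}_1)=\wedge B\overline{\Gamma}_1$. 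So, modulo these identifications, what must be proved is exactly that the Mather--Thurston comparison map is a homology isomorphism.

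Next I would localise $S^1$ to intervals. Fix a covering $S^1=A_1\cup A_2$ by two closed arcs whose intersection $A_1\cap A_2$ is a disjoint union of two arcs. Since $S^1=A_1\cup_{A_1\cap A_2}A_2$ is a homotopy pushout and $\mathrm{Map}(-,B\overline{\Gamma}_1)$ carries it to a homotopy pull-back, $\wedge B\overline{\Gamma}_1=\mathrm{Map}(S^1,B\overline{\Gamma}_1)$ is the homotopy pull-back of $\mathrm{Map}(A_1,B\overline{\Gamma}_1)\to\mathrm{Map}(A_1\cap A_2,B\overline{\Gamma}_1)\leftarrow\mathrm{Map}(A_2,B\overline{\Gamma}_1)$, with each mapping space equivalent to a finite product of copies of $B\overline{\Gamma}_1$. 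On the diffeomorphism side one uses fragmentation along the covering to build a matching bar-type (homotopy colimit) resolution out of classifying spaces of groups of diffeomorphisms supported in the arcs, each such group being a group of compactly supported diffeomorphisms of $\mathbb{R}$. The comparison map respects both decompositions, so comparing them reduces the theorem to the local, interval case: that $B\mathrm{Diff}^{\infty,\delta}_c(\mathbb{R})$ is homology equivalent to $\Omega B\overline{\Gamma}_1$. Equivalently, since $\mathrm{Diff}^\infty_c(\mathbb{R})$ is contractible --- being homeomorphic, via $f\mapsto f-\mathrm{id}$, to the convex open set $\{g\in C^\infty_c(\mathbb{R}) : g'>-1\}$ --- this says that $B\overline{\mathrm{Diff}}^{\infty,\delta}_c(\mathbb{R})$ is homology equivalent to $\Omega B\overline{\Gamma}_1$, which is Mather's theorem.

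The technical heart, and the step I expect to be the main obstacle, is precisely this localisation and gluing. One must check that the homotopy-colimit resolution on the diffeomorphism side has the right homology and is matched, level by level, with the homotopy-limit description of $\wedge B\overline{\Gamma}_1$; doing this rigorously calls for either a ``microfibration''/open-cover argument in the style of Segal and McDuff, or an induction over a handle decomposition of $S^1$ with Mather's local result as the base case and a Mayer--Vietoris step for the $1$-handle. Two points need care: $B\overline{\Gamma}_1$ has no convenient finite-dimensional or CW model --- its holonomy groupoid is non-Hausdorff --- so one must work with singular homology throughout; and one is mixing the discrete topology on $\mathrm{Diff}_+S^1$ with the $C^\infty$ topology used to form the homotopy fibre, so the whole comparison has to be organised fibrewise over $B\mathrm{Diff}_+S^1\simeq BSO(2)$. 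Granted these reductions, the remaining checks --- that the comparison map is well defined on the resolutions and is a levelwise homology equivalence --- are routine.
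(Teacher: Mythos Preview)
The paper does not give its own proof of this statement: Theorem~\ref{th:Thurston} is quoted as a result of Thurston (with the citation \cite{MR339267}) and is used as a black box throughout. There is therefore nothing in the paper to compare your argument against.

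That said, your outline is a faithful sketch of the Mather--Thurston argument as it applies to $S^1$: the identification of the two sides, the fragmentation/handle reduction to the compactly supported case on $\mathbb{R}$, and the appeal to Mather's theorem that $B\mathrm{Diff}^{\delta}_c(\mathbb{R})\to \Omega B\overline{\Gamma}_1$ is a homology equivalence are exactly the ingredients of the standard proof. You are also right that the gluing step is where the real work lies; what you have written is an honest roadmap rather than a proof, and in particular the ``matching'' of the bar resolution on the diffeomorphism side with the homotopy pull-back description of $\wedge B\overline{\Gamma}_1$ requires a genuine argument (a fragmentation lemma together with a Mayer--Vietoris comparison, or the Segal--McDuff microfibration machinery). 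For the purposes of this paper, though, one simply cites Thurston.
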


By making use of this theorem, we obtain the following results.

\begin{thm}
$\mathrm{(i)}\ $
There exist isomorphisms:
\begin{align*}
H_2(B\mathrm{Diff}_+^\delta S^1;\Z)&\cong H_3(B\overline{\Gamma}_1;\Z)\oplus\Z \quad
(\text{canonical direct sum}),\\
H_2(B\widetilde{\mathrm{Diff}}_+^\delta S^1;\Z)&\cong H_3(B\overline{\Gamma}_1;\Z).\
\end{align*}

$\mathrm{(ii)}\ $
There exist isomorphisms:
\begin{align*}
H_3(B\mathrm{Diff}_+^\delta S^1;\Z)&\cong H_3(\Omega B\overline{\Gamma}_1;\Z),\\
H_3(B\widetilde{\mathrm{Diff}}_+^\delta S^1;\Z)&\cong H_3(B\overline{\Gamma}_1;\Z)\oplus
H_3(\Omega B\overline{\Gamma}_1;\Z).
\end{align*}

$\mathrm{(iii)}\ $
If we denote by 
$\mu: H_2(B\mathrm{Diff}_+^\delta S^1;\Z)\rightarrow H_3(B\widetilde{\mathrm{Diff}}_+^\delta S^1;\Z)$
a part of the Gysin exact sequence associated with the central extension \eqref{eq:ce},
then 
it is given as follows.
\begin{align*}
H_2(B\mathrm{Diff}_+^\delta S^1;\Z)\cong &H_3(B\overline{\Gamma}_1;\Z)\oplus\Z
\ni (\sigma,n)\\
&\overset{\mu}{\longmapsto} (\sigma,0)\in H_3(B\widetilde{\mathrm{Diff}}_+^\delta S^1;\Z)\cong
H_3(B\overline{\Gamma}_1;\Z)\oplus H_3(\Omega B\overline{\Gamma}_1;\Z).
\end{align*}
The generator $1\in \Z$ of the canonical summand $\Z\subset H_2(B\mathrm{Diff}_+^\delta S^1;\Z)$ 
in $\mathrm{(i)}$ is 
characterized by the two conditions $\mu(1)=0$ and $\chi(1)=1$,
where $\chi\in H^2(B\mathrm{Diff}_+^\delta S^1;\Z)$ denotes the Euler class.
\label{th:main}
\end{thm}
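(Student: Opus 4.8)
The plan is to feed Theorem~\ref{th:Thurston} into two elementary fibrations: the evaluation fibration of the free loop space, and the principal circle bundle coming from the central extension \eqref{eq:ce}. Recall first that $B\overline{\Gamma}_1$ is $2$-connected (Mather--Thurston), so $\Omega B\overline{\Gamma}_1$ is simply connected and $H_2(\Omega B\overline{\Gamma}_1)\cong\pi_3(B\overline{\Gamma}_1)\cong H_3(B\overline{\Gamma}_1)$ by Hurewicz. In the Serre spectral sequence of the evaluation fibration $\Omega B\overline{\Gamma}_1\to\wedge B\overline{\Gamma}_1\xrightarrow{\mathrm{ev}}B\overline{\Gamma}_1$ the constant-loop section $s$ forces every differential out of the base row to vanish, and since $H_1(B\overline{\Gamma}_1)=H_2(B\overline{\Gamma}_1)=0$ the sequence collapses in total degrees $\le 3$. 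This gives $H_2(\wedge B\overline{\Gamma}_1)\cong H_2(\Omega B\overline{\Gamma}_1)\cong H_3(B\overline{\Gamma}_1)$ and a splitting $H_3(\wedge B\overline{\Gamma}_1)\cong H_3(B\overline{\Gamma}_1)\oplus H_3(\Omega B\overline{\Gamma}_1)$, the two summands being the images of $s_*$ and of the fibre inclusion. By Theorem~\ref{th:Thurston} these are exactly the statements of $\mathrm{(i)}$ and $\mathrm{(ii)}$ that concern $B\widetilde{\mathrm{Diff}}_+^\delta S^1$.

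Next, the central extension \eqref{eq:ce} exhibits $B\widetilde{\mathrm{Diff}}_+^\delta S^1\xrightarrow{p}B\mathrm{Diff}_+^\delta S^1$ as a principal circle bundle with Euler class $\chi$; equivalently it is the pullback of $ES^1\to BS^1$ along the classifying map $B\mathrm{Diff}_+^\delta S^1\to B\mathrm{Diff}_+S^1\simeq BS^1$ of the underlying bundle. As $\mathrm{Diff}_+S^1$ and $\widetilde{\mathrm{Diff}}_+S^1$ are perfect, $H_1$ of both classifying spaces vanishes, so the homology Gysin sequence yields $0\to H_2(B\widetilde{\mathrm{Diff}}_+^\delta S^1)\xrightarrow{p_*}H_2(B\mathrm{Diff}_+^\delta S^1)\xrightarrow{\cap\chi}\Z\to0$; this splits and gives the first isomorphism of $\mathrm{(i)}$, with the canonical summand $H_3(B\overline{\Gamma}_1)=\Ker(\cap\chi)=\Im p_*$. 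The same sequence identifies $\Im\big(\mu\colon H_2(B\mathrm{Diff}_+^\delta S^1)\to H_3(B\widetilde{\mathrm{Diff}}_+^\delta S^1)\big)$ with $\Ker\big(p_*\colon H_3(B\widetilde{\mathrm{Diff}}_+^\delta S^1)\to H_3(B\mathrm{Diff}_+^\delta S^1)\big)$, so everything that remains in $\mathrm{(ii)}$ and $\mathrm{(iii)}$ reduces to computing the map $\mu$.

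To compute $\mu$, observe that the construction of the classifying map $h$ in Theorem~\ref{th:Thurston} is natural under rotation of the circle, and $\mathrm{Diff}_+S^1\simeq\mathrm{SO}(2)$; hence, up to homology, the fibration $B\widetilde{\mathrm{Diff}}_+^\delta S^1\to B\mathrm{Diff}_+^\delta S^1\to B\mathrm{Diff}_+S^1$ is the Borel construction over $BS^1$ of the loop-rotation action of $S^1$ on $\wedge B\overline{\Gamma}_1$, with $p$ the associated free $S^1$-bundle. An $S^1$-fixed, i.e.\ constant, loop provides a section of $B\mathrm{Diff}_+^\delta S^1\to BS^1$; the resulting class $1\in H_2(B\mathrm{Diff}_+^\delta S^1)$ satisfies $\chi(1)=1$, and $\mu(1)=0$ because the circle bundle pulled back along this section is $ES^1\to BS^1$, whose total space is acyclic. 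On the other hand, under $\Ker(\cap\chi)\cong H_2(\wedge B\overline{\Gamma}_1)$ the map $\mu$ becomes the operator $x\mapsto a_*\!\big([S^1]\otimes x\big)$ induced by the rotation $a\colon S^1\times\wedge B\overline{\Gamma}_1\to\wedge B\overline{\Gamma}_1$. Post-composing with $\mathrm{ev}_*$ turns this, on $H_2$, into the homology suspension $H_2(\Omega B\overline{\Gamma}_1)\to H_3(B\overline{\Gamma}_1)$, which is an isomorphism since $B\overline{\Gamma}_1$ is $2$-connected; a connectivity/degree argument shows in addition that the image of this operator lies inside the constant-loop summand $s_*H_3(B\overline{\Gamma}_1)$. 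Therefore $\mu$ carries $\Ker(\cap\chi)=H_3(B\overline{\Gamma}_1)$ isomorphically onto the first summand of $H_3(B\widetilde{\mathrm{Diff}}_+^\delta S^1)$ and annihilates the complementary $\Z=\Z\cdot 1$; this is precisely the formula of $\mathrm{(iii)}$ and pins down the canonical splitting in $\mathrm{(i)}$. Feeding $\Im\mu=H_3(B\overline{\Gamma}_1)$ back into the exact sequence of the previous paragraph identifies $H_3(B\mathrm{Diff}_+^\delta S^1)$ with the quotient $H_3(\Omega B\overline{\Gamma}_1)$, completing $\mathrm{(ii)}$.

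The routine parts are the free-loop-space computation, the Gysin sequence, and perfectness. The real work sits in the third paragraph: making precise the $\mathrm{SO}(2)$-equivariant refinement of Theorem~\ref{th:Thurston}, so that the fibration over $BS^1$ genuinely is, homologically, a Borel construction and $\mu$ genuinely is the rotation operator; and then carrying out the computation of that operator on $H_2(\wedge B\overline{\Gamma}_1)$ — namely showing it is injective with image exactly the constant loops. I expect the latter to be the main obstacle.
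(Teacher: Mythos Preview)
Your proposal is correct and follows essentially the same route as the paper. Both arguments use (a) the Serre spectral sequence of $\Omega X\to\wedge X\to X$ for $X=B\overline{\Gamma}_1$ $2$-connected (the paper's Proposition~\ref{prop:gh23}), (b) the Gysin sequence of the central extension, (c) the $S^1$-equivariance of Thurston's map $H$ to identify $\mu\circ p_*$ with the loop-rotation operator $\nu$ (the paper's Proposition~\ref{prop:nu}), and (d) the fixed points of loop rotation (constant loops) to produce the canonical $\Z$ summand with $\chi=1$, $\mu=0$; in the paper this last step is packaged as the Haefliger--Nariman observation (Theorem~\ref{prop:chi1}) and the Borel-construction comparison (Proposition~\ref{prop:Borel}).

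The one place where your write-up is sketchier than the paper is your ``connectivity/degree argument'' that the rotation operator on $H_2(\wedge B\overline{\Gamma}_1)$ has image exactly in the constant-loop summand $s_*H_3(B\overline{\Gamma}_1)$. The paper fills this in concretely: since $H_2(\Omega B\overline{\Gamma}_1)\cong\pi_3(B\overline{\Gamma}_1)$, any class $\tau$ is represented by a map $f\colon S^3\to B\overline{\Gamma}_1$; one then computes $\nu(\tau)$ inside $\wedge S^3$ using the explicit ring structure $H_*(\wedge S^3;\Z)\cong\Z[\alpha]\otimes\wedge(\beta)$ (Proposition~\ref{prop:wk}), where $\theta''_*(\alpha\times[S^1])=\beta$ and $H_3(\Omega S^3)=0$, and pushes forward by $(\wedge f)_*$. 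This is exactly the computation you flag as the main obstacle, and it resolves it.
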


Recall here that Herman \cite{herman1, herman2} proved 
$H_1(B\mathrm{Diff}_+^\delta S^1;\Z)=H_1(B\mathrm{Diff}_+^{\omega,\delta}S^1;\Z)=0$. 

\begin{example}
We describe an element belonging to the canonical direct summand $\Z\subset H_2(B\mathrm{Diff}_+^\delta S^1;\Z)$.
Here we use Thurston's original idea by which he proved linear independence of the two classes $\chi,\alpha\in H^2(B\mathrm{Diff}_+^\delta S^1;\R)$ 
where $\alpha$ denotes the Godbillon-Vey class integrated along the fiber.

Let 
$$
\rho:\pi_1(\Sigma_2)\rightarrow \mathrm{PSL}(2,\R)\rightarrow \mathrm{Diff}_+^{\omega,\delta} S^1
$$ 
be a Fuchsian representation, corresponding to a hyperbolic structure
on a closed surface $\Sigma_2$ of genus $2$, followed by a natural embedding 
$\mathrm{PSL}(2,\R)\subset \mathrm{Diff}_+^{\omega,\delta} S^1$. 
It is known that this representation lifts to $\mathrm{SL}(2,\R)$
(see \cite{MR0440554}) so that we have
$$
\tilde{\rho}:\pi_1(\Sigma_2)\rightarrow \mathrm{SL}(2,\R)\rightarrow \mathrm{Diff}_+^{\omega,\delta} S^1
$$
where the second homomorphism is induced by the natural action of $\mathrm{SL}(2,\R)$
on the set of oriented directions from the origin of $\R^2$.
Then $\chi(\rho_*([\Sigma_2]))=-2$ while $\chi(\tilde{\rho}_*([\Sigma_2]))=-1$. Now set 
$\sigma=\tilde{\rho}_*([\Sigma_2])-2\rho_*([\Sigma_2])\in H_2(B\mathrm{Diff}_+^\delta S^1;\Z)$ so that $\chi(\sigma)=3$.
We show that $\mu(\sigma)=0$ which implies that 
$\sigma$ represents
$3$ of the canonical direct summand $\Z\subset H_2(B\mathrm{Diff}_+^\delta S^1;\Z)$.

We denote by $S^1(\rho)$ (resp. $S^1(\tilde\rho)$) the total space of the flat
$S^1$-bundle induced by $\rho$ (resp. $\tilde\rho$). Then there exists a
fiberwise $2$-fold covering map $S^1(\tilde\rho)\rightarrow S^1(\rho)$. Therefore
the classifying map for the codimension one foliations
on these total spaces factors as 
$$
S^1(\tilde\rho) \xrightarrow[\text{$2$-fold cover}]{\text{fiberwise}} S^1(\rho)\longrightarrow B\overline{\Gamma}_1.
$$
The images in $H_3(B\overline{\Gamma}_1;\Z)$ of $[S^1(\tilde\rho)], [S^1(\rho)]$ under the above map
represent the first summands of $\mu(\tilde{\rho}_*[\Sigma_2]),\mu(\rho_*[\Sigma_2])$ and the 
first one is twice the second one.
Therefore
Theorem \ref{th:main} $\mathrm{(ii), (iii)}$ implies that $\mu(\sigma)=0\in H_3(B\widetilde{\mathrm{Diff}}_+^\delta S^1;\Z)$
as required. 

Instead of $\mathrm{PSL}(2,\R)$, we may use the group $\mathrm{GL}^+(2,\Z[\frac{1}{2}])$ using a result of Milnor
\cite{MR95518}. 
Also it is a very important question whether 
$\mu(\sigma)=0$ holds already in $H_3(B\widetilde{\mathrm{Diff}}_+^{\omega,\delta} S^1;\Z)$ or not.
This is because if it holds, then we can conclude that $\chi^2$ does not vanish rationally for real analytic flat $S^1$-bundles.
\label{ex:canonical}
\end{example}

Let
$
\varphi_k:\widetilde{\mathrm{Diff}}_+^\delta S^1\rightarrow\widetilde{\mathrm{Diff}}_+^\delta S^1\ (k=2,3,\cdots)
$
be the endomorphism defined by
$$
\varphi_k(f)(x)=\frac{1}{k} f(kx)\quad (f\in \widetilde{\mathrm{Diff}}_+^\delta S^1).
$$
In the situation of the above Example \ref{ex:canonical}, the endomorphism $\varphi_2$ appears in the following commutative diagram.
$$
\begin{CD}
\pi_1(S^1(\tilde{\rho}))
 @>>> \widetilde{\mathrm{Diff}}_+^\delta S^1
\\
@V{\cap}V{\text{index $2$}}V  @AA{\varphi_2}A\\
\pi_1(S^1(\rho)) @>>>\widetilde{\mathrm{Diff}}_+^\delta S^1.
\end{CD}
$$

\begin{thm}
For any element $\sigma\in H_2(B\mathrm{Diff}_+^\delta S^1;\Z)$,
and for any $k$, we have
$$
(\varphi_k)_* (\mu(\sigma))=\mu(\sigma)\in H_3(B\widetilde{\mathrm{Diff}}_+^\delta S^1;\Z).
$$
\label{th:phi}
\end{thm}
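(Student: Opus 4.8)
The plan is to reduce the statement to a fact about the free-loop-space model provided by Theorem~\ref{th:Thurston}, where $\varphi_k$ acts in a way that is homotopically trivial. First I would observe that the endomorphism $\varphi_k$ is conjugation-free in the following sense: although $\varphi_k$ is not an automorphism, the map it induces on $B\overline{\mathrm{Diff}}_+S^1 \simeq B\widetilde{\mathrm{Diff}}^\delta_+ S^1$ is, after passing to the free-loop-space model $\wedge B\overline{\Gamma}_1$, nothing but the map induced by rotating the loop parameter combined with a degree-$k$ self-map of the $S^1$-product. Concretely, the flat $S^1$-product classified by $B\widetilde{\mathrm{Diff}}^\delta_+S^1 \times S^1 \to B\overline{\Gamma}_1$ is pulled back along $\varphi_k$ to the product obtained by precomposing the $S^1$-factor with the $k$-fold covering $S^1 \to S^1$; this is exactly the content of the commutative diagram displayed just before the statement, read at the level of classifying spaces.

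The key steps, in order, are: (1) identify $(\varphi_k)_*$ on $H_*(B\widetilde{\mathrm{Diff}}^\delta_+S^1)$ with the map induced on $H_*(\wedge B\overline{\Gamma}_1)$ by the self-map of the free loop space that reparametrizes a loop $\gamma(t)$ as $\gamma(kt)$ — call it $r_k$; (2) show that $r_k$ is homotopic to the identity on $\wedge B\overline{\Gamma}_1$, which follows because the circle action on $\wedge B\overline{\Gamma}_1$ by rotation gives a homotopy from $\mathrm{id}$ to itself, and the $k$-fold reparametrization $r_k$ lifts through the evaluation fibration $\Omega B\overline{\Gamma}_1 \to \wedge B\overline{\Gamma}_1 \to B\overline{\Gamma}_1$ compatibly, acting as the identity on the base $B\overline{\Gamma}_1$ and as the $k$-fold power map on the fiber $\Omega B\overline{\Gamma}_1$; (3) restrict attention to the image of $\mu$: by Theorem~\ref{th:main}(iii), $\mu(\sigma)$ lands in the summand $H_3(B\overline{\Gamma}_1;\Z)$, i.e. in the part detected on the \emph{base} of the evaluation fibration, and on that summand $r_k$ acts as the identity on the nose because $r_k$ covers the identity of $B\overline{\Gamma}_1$. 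Thus even though the power map on $\Omega B\overline{\Gamma}_1$ is not the identity on homology in general, it is irrelevant here since $\mu(\sigma)$ has zero component in $H_3(\Omega B\overline{\Gamma}_1;\Z)$.

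An alternative, perhaps cleaner route avoids the loop-space homotopy altogether: use the functoriality in the Gysin sequence for \eqref{eq:ce}. The map $\varphi_k$ descends to an endomorphism $\bar\varphi_k$ of $\mathrm{Diff}_+^\delta S^1$ making a map of the central extensions \eqref{eq:ce}, so $(\varphi_k)_* \circ \mu = \mu \circ (\bar\varphi_k)_*$. Hence it suffices to show $(\bar\varphi_k)_*$ acts as the identity on $H_2(B\mathrm{Diff}_+^\delta S^1;\Z)$ \emph{modulo} the kernel of $\mu$ — equivalently, on the $H_3(B\overline{\Gamma}_1;\Z)$-summand of Theorem~\ref{th:main}(i). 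But $\bar\varphi_k$ preserves the Euler class (it has degree $1$ on the $S^1$-fiber up to the reparametrization, which does not change the Euler number) and, more to the point, it induces the identity on the foliated $S^1$-bundle up to the $k$-fold fiberwise cover, which as in Example~\ref{ex:canonical} does not change the class in $H_3(B\overline{\Gamma}_1;\Z)$ after composing to the classifying space. I would carry this out by writing down the map of flat $S^1$-bundles explicitly and tracking the induced map on the foliation classifying maps.

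The main obstacle is step (2)/(3): making precise that $\varphi_k$, which is genuinely \emph{not} a homotopy equivalence of $B\widetilde{\mathrm{Diff}}^\delta_+S^1$ (it multiplies the central $\Z$ by $1$ but mangles the free-loop direction), nonetheless acts trivially on the relevant summand. The delicate point is to pin down precisely how $r_k$ interacts with the splitting in Theorem~\ref{th:main}(ii)–(iii) — in particular to verify that the splitting is natural enough that $r_k$ respects it, so that the nontrivial action on the $\Omega B\overline{\Gamma}_1$-factor cannot leak into the $B\overline{\Gamma}_1$-factor where $\mu(\sigma)$ lives. Once that compatibility is established, the conclusion is immediate.
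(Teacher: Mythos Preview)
Your main approach (steps (1)--(3)) is essentially the paper's argument: both invoke Theorem~\ref{th:main}(iii) to place $\mu(\sigma)$ in the summand $H_3(B\overline{\Gamma}_1;\Z)\subset H_3(B\tilde G^\delta;\Z)$, and then argue that $(\varphi_k)_*$ acts trivially on that summand. The paper's endgame, however, is considerably cleaner than your $r_k$-analysis: it observes that the detecting map
\[
B\tilde G^\delta \longrightarrow \wedge B\overline{\Gamma}_1 \xrightarrow{\ \mathrm{ev}_0\ } B\overline{\Gamma}_1
\]
factors through $B\mathrm{Diff}_+^\delta\R$, and that on the larger group $\mathrm{Diff}_+^\delta\R$ the map $\varphi_k$ is the \emph{inner} automorphism given by conjugation with the dilation $x\mapsto kx$. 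Since inner automorphisms act trivially on group homology, the conclusion follows in one line. This dissolves exactly the ``main obstacle'' you flag, with no need to check compatibility of $r_k$ with the splitting. Incidentally, your step~(2) is mis-stated: $r_k$ is \emph{not} homotopic to the identity on $\wedge B\overline{\Gamma}_1$ (as you yourself note, it acts as the $k$-th power on the fiber $\Omega B\overline{\Gamma}_1$); only the argument in step~(3) is actually needed, and that one is correct.

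Your alternative route contains a genuine error: $\varphi_k$ does \emph{not} descend to an endomorphism $\bar\varphi_k$ of $G=\mathrm{Diff}_+^\delta S^1$. Computing directly, $\varphi_k(T)(x)=\frac{1}{k}T(kx)=x+\frac{1}{k}$, which projects to the nontrivial rotation $R(1/k)\in G$; hence $\varphi_k$ does not carry the central $\Z=\langle T\rangle$ into itself, and the quotient map is ill-defined. This is precisely why the paper is forced to introduce the rational construction $\varphi_k^\Q$ (Definition~\ref{def:phi}) via the finite cover $G^{(k)}\to G$, rather than a naive descent. The Gysin-naturality argument you sketch therefore cannot be run as stated.
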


\begin{problem}
Study 
the above equality in the real analytic case. Namely, 
for a given element $\sigma\in H_2(B\mathrm{Diff}_+^{\omega,\delta} S^1;\Z)$
and $k$, determine whether the following equality holds or not.
$$
(\varphi_k)_* (\mu(\sigma))=\mu(\sigma)\in H_3(B\widetilde{\mathrm{Diff}}_+^{\omega,\delta} S^1;\Z)
$$
\end{problem}

This is related to the question of non-triviality of $\chi^2\in H^4(B\mathrm{Diff}_+^{\omega,\delta} S^1;\Q)$
because of the following result.

By the way, even in the smooth case, 
it is an open problem to construct explicit $4$-cycles in $H_4(B\mathrm{Diff}_+^{\delta} S^1;\Z)$ 
with non-vanishing $\chi^2$. 

\begin{prop}
If the above problem will be affirmatively solved for one particular element 
$\sigma\in H_2(B\mathrm{Diff}_+^{\omega,\delta} S^1;\Z)$ with $\chi(\sigma)\not=0$ and 
one particular $k$,
then we have
$$
\chi^2\not= 0\in H^4(B\mathrm{Diff}_+^{\omega,\delta} S^1;\Q).
$$
\label{prop:chi}
\end{prop}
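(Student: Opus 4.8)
The plan is to argue by contradiction: assume $\chi^2 = 0 \in H^4(B\mathrm{Diff}_+^{\omega,\delta} S^1;\Q)$ and derive that the hypothesis of the problem cannot hold for any $\sigma$ with $\chi(\sigma)\neq 0$. The first step is to record the Gysin sequence for the central extension \eqref{eq:ce}, but now over the reals (or rationals) and in the real analytic setting; since the relevant group is $\Z$, cup product with $\chi$ and the map $\mu$ fit into the exact sequence
$$
\cdots \to H_2(B\mathrm{Diff}_+^{\omega,\delta} S^1;\Q) \xrightarrow{\ \mu\ } H_3(B\widetilde{\mathrm{Diff}}_+^{\omega,\delta} S^1;\Q) \to \cdots,
$$
with $\chi$ playing the role of the Euler class of the classifying $S^1$-bundle $B\mathrm{Diff}_+^{\omega,\delta} S^1 \to B\widetilde{\mathrm{Diff}}_+^{\omega,\delta} S^1$ reversed; equivalently one uses the spectral sequence of the $S^1$-fibration. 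The key point to extract is that capping with $\chi$ and the transfer/Gysin maps intertwine with the endomorphism $\varphi_k$ in a controlled way: $\varphi_k$ acts on the base by an endomorphism whose effect on $\chi$ is multiplication by a definite integer (for $\varphi_k$ one checks $\varphi_k^* \chi = \chi$ on the level of the rotation number / Euler class, since $\varphi_k$ is a lift compatible with the covering), while its effect on the ``fiber direction'' is rescaling by $k$.

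The second step is the crux: suppose $\sigma \in H_2(B\mathrm{Diff}_+^{\omega,\delta} S^1;\Z)$ with $\chi(\sigma)\neq 0$ satisfies $(\varphi_k)_*\mu(\sigma)=\mu(\sigma)$. I would then pair both sides against an appropriate degree-$4$ cohomology class. Concretely, if $\chi^2 = 0$ rationally, then $\chi^2$ evaluates to zero on every $4$-cycle, and in particular the $4$-cycle obtained from $\sigma$ by the mapping-torus / Borel construction built from $\varphi_k$ (the same construction that produces the commutative square with $\varphi_2$ in the text) has $\langle \chi^2, \cdot\rangle = 0$. On the other hand, the relation $(\varphi_k)_*\mu(\sigma) = \mu(\sigma)$ combined with the Gysin sequence forces this same $4$-cycle to have $\chi^2$-value a nonzero multiple of $\chi(\sigma)$ — essentially because $\mu(\sigma)$ capped with $\chi$ recovers $\sigma$ up to the canonical $\Z$-summand, and the $\varphi_k$-invariance is precisely the obstruction to $\mu(\sigma)$ lying in that summand, i.e. to $\chi(\sigma)$ being ``spurious.'' Unwinding: $\chi(\sigma)\neq 0$ and $\varphi_k$-invariance together produce a genuine non-torsion contribution in $H_4$ detected by $\chi^2$, contradicting $\chi^2 = 0$.

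The main obstacle — and the step I expect to require the most care — is the precise bookkeeping in the Gysin/spectral sequence that converts the equality $(\varphi_k)_*\mu(\sigma) = \mu(\sigma)$ into a statement about $\langle \chi^2, \tau\rangle$ for a specific $4$-cycle $\tau$. One must identify which $4$-dimensional class $\tau$ to use (the natural candidate is built by gluing two copies of a mapping-torus-type cycle over $\sigma$ using $\varphi_k$, mimicking Example \ref{ex:canonical}), check that its boundary vanishes in homology exactly when the $\varphi_k$-invariance holds, and then compute $\chi^2$ on it. The computation of $\chi^2(\tau)$ should reduce, via naturality of $\chi$ under $\varphi_k$ and under $p$ in \eqref{eq:ce}, to a nonzero integer multiple of $\chi(\sigma)$; keeping track of these multiplicities (and of the factor $k$) is the delicate bookkeeping. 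Once that is in place, $\chi^2(\tau) \neq 0$ is immediate, and hence $\chi^2 \neq 0 \in H^4(B\mathrm{Diff}_+^{\omega,\delta} S^1;\Q)$, completing the proof.
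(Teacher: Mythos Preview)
Your overall strategy --- produce from the hypothesis $(\varphi_k)_*\mu(\sigma)=\mu(\sigma)$ a $4$-cycle on which $\chi^2$ does not vanish --- is the right one, but the execution has a genuine gap, and one of your side claims is wrong. The endomorphism $\varphi_k$ does \emph{not} descend to $G$: it sends the central generator $T$ (translation by $1$) to translation by $1/k$, so the assertion ``$\varphi_k^*\chi=\chi$'' is meaningless. Correspondingly, the action on the direction detected by $\chi$ is not trivial; the correct eigenvalue is $\tfrac{1}{k}$, and this factor is exactly what makes the argument work.

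The paper's proof bypasses any geometric $4$-cycle construction and is purely algebraic, resting on a commutation identity you are missing. One introduces the endomorphism $\varphi_k^{\Q}=(i_k)_*(p_k)_*^{-1}$ of $H_*(BG^{\omega,\delta};\Q)$, built from the $k$-fold cover $p_k:G^{(k)}\to G$ and the inclusion $i_k:G^{(k)}\subset G$, and uses the identity
\[
(\varphi_k)_*\circ\mu \;=\; \mu\circ\Bigl(\tfrac{1}{k}\,\varphi_k^{\Q}\Bigr)
\]
(Proposition~\ref{prop:phi}, quoted from \cite{MR759480}) together with the fact that $\chi\bigl(\varphi_k^{\Q}(\sigma)\bigr)=\tfrac{1}{k}\chi(\sigma)$. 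The hypothesis then gives $\mu\bigl(\sigma-\tfrac{1}{k}\varphi_k^{\Q}(\sigma)\bigr)=0$, so by exactness of the Gysin sequence $\sigma-\tfrac{1}{k}\varphi_k^{\Q}(\sigma)=\tau\cap\chi$ for some $\tau\in H_4(BG^{\omega,\delta};\Q)$, and
\[
\chi^2(\tau)=\chi\Bigl(\sigma-\tfrac{1}{k}\varphi_k^{\Q}(\sigma)\Bigr)=\Bigl(1-\tfrac{1}{k^2}\Bigr)\chi(\sigma)\neq 0.
\]
The ``delicate bookkeeping'' you flag as the main obstacle is thus entirely absorbed by the identity $(\varphi_k)_*\mu=\mu\circ\tfrac{1}{k}\varphi_k^{\Q}$; your proposed mapping-torus construction does not supply a substitute for it, and without this identity (or an equivalent) the passage from $(\varphi_k)_*\mu(\sigma)=\mu(\sigma)$ to a class in $\mathrm{Im}(\cap\,\chi)\subset H_2$ is not established.
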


Next, we consider what can be said about
the homology of $B\mathrm{Diff}_+^{\omega,\delta} S^1$
assuming that 
$\chi^2=0\in H^4(B\mathrm{Diff}_+^{\omega,\delta} S^1;\Q)$
(or more generally $\chi^k=0$ for some $k\geq 2$).
We show that there will arise rather strange integral homology classes
of $B\mathrm{Diff}_+^{\omega,\delta} S^1$.
By Proposition \ref{prop:chi}, the above assumption is equivalent to the
following condition:
\begin{align*}
(\varphi_k)_* (\mu(\sigma))&\not=\mu(\sigma)\in H_3(B\widetilde{\mathrm{Diff}}_+^{\omega,\delta} S^1;\Z)\\
&\text{for any $\sigma\in H_2(B\mathrm{Diff}_+^{\omega,\delta} S^1;\Z)$ with $\chi(\sigma)\not=0$
and $k\geq 2$}.
\end{align*}

\begin{thm}
Assume that $\chi^2=0\in H^4(B\mathrm{Diff}_+^{\omega,\delta} S^1;\Q)$. Then
the quotient group
$$
H_{3}(B\widetilde{\mathrm{Diff}}_+^{\omega,\delta}S^1;\Z)
/\mu(p_\ast(H_{2}(B\widetilde{\mathrm{Diff}}_+^{\omega,\delta}S^1;\Z)))
$$
contains a group $P\ (\supset \Z)$ which
admits a surjective homomorphism onto $\Q$. This group $P$ vanishes in 
$H_3(B\widetilde{\mathrm{Diff}}_+^{\delta}S^1;\Z)$
and there is a subgroup
$$
P/\Z\subset H_{3}(B\mathrm{Diff}_+^{\omega,\delta}S^1;\Z)
$$
which admits a surjective homomorphism onto $\Q/\Z$.
\label{th:Q}
\end{thm}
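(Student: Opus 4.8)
The plan is to exploit Theorem~\ref{th:phi} in its smooth form as a ``template'' and measure how far the real analytic situation is forced to deviate from it once $\chi^2=0$ is assumed. First I would fix one particular $\sigma_0\in H_2(B\mathrm{Diff}_+^{\omega,\delta}S^1;\Z)$ with $\chi(\sigma_0)\neq 0$ — for instance the image of the class $\sigma$ built from the Fuchsian representation in Example~\ref{ex:canonical}, whose Euler number is $3$ — and consider the sequence of classes $\mu(\sigma_0),\ (\varphi_2)_*\mu(\sigma_0),\ (\varphi_2^2)_*\mu(\sigma_0),\dots$ in $H_3(B\widetilde{\mathrm{Diff}}_+^{\omega,\delta}S^1;\Z)$. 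Under the natural map to $H_3(B\widetilde{\mathrm{Diff}}_+^{\delta}S^1;\Z)$ all of these become equal by Theorem~\ref{th:phi}; hence the differences $(\varphi_2^{j})_*\mu(\sigma_0)-\mu(\sigma_0)$, which by Proposition~\ref{prop:chi} (in its contrapositive form spelled out just before Theorem~\ref{th:Q}) are \emph{nonzero} whenever $\chi^2=0$, all lie in the kernel of the map to the smooth homology. The group generated by these differences, together with $\mu(\sigma_0)$ itself, is the candidate for $P$.

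Second I would pin down the $\Q$-quotient. The Euler class gives a homomorphism $\chi\colon H_2(B\mathrm{Diff}_+^{\omega,\delta}S^1;\Z)\to\Z$, and the key arithmetic fact is the scaling behaviour of $\varphi_k$ on Euler numbers: since $\varphi_k$ is conjugate, after passing to the circle, to an essentially harmless reparametrization but multiplies translation numbers appropriately, one gets $\chi((\varphi_k)_*\tau)$ related to $\chi(\tau)$ in a controlled way on the universal cover — concretely the central $\Z$ in \eqref{eq:ce} is sent to $k\Z$, so that the boundary map $\mu$ and the operators $(\varphi_k)_*$ interact via multiplication by powers of $k$ on the relevant torsion. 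Working $2$-adically (taking $k=2$ and iterating) the classes $(\varphi_2^{j})_*\mu(\sigma_0)-\mu(\sigma_0)$ form a system on which one reads off a surjection onto $\Z[\tfrac12]/\Z$ or, after also varying $k$ over all primes, onto $\Q/\Z$; adjoining $\mu(\sigma_0)$ (which has infinite order, detected by $\chi$) upgrades $\Q/\Z$ to a surjection of $P$ onto $\Q$. The assertion $\Z\subset P$ is then immediate from $\chi(\mu(\sigma_0))$-type detection, and the passage to the quotient by $\mu(p_*H_2(B\widetilde{\mathrm{Diff}}_+^{\omega,\delta}S^1;\Z))$ is exactly what kills the ambiguity coming from classes already lifted from the universal cover, using Theorem~\ref{th:main}~(iii) as the computational model in the smooth case to know this quotient is the right receptacle.

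Third, for the last clause — the subgroup $P/\Z\subset H_3(B\mathrm{Diff}_+^{\omega,\delta}S^1;\Z)$ surjecting onto $\Q/\Z$ — I would push everything down along $p_*\colon H_3(B\widetilde{\mathrm{Diff}}_+^{\omega,\delta}S^1;\Z)\to H_3(B\mathrm{Diff}_+^{\omega,\delta}S^1;\Z)$. The infinite-order generator $\mu(\sigma_0)$ maps to $0$ (that is the whole point of $\mu$ being a connecting map: $p_*\mu=0$ in the relevant portion of the Gysin sequence), while the torsion differences survive; quotienting $P$ by the $\Z$ generated by $\mu(\sigma_0)$ therefore embeds into the smooth-coefficient homology and retains the surjection onto $\Q/\Z$. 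That these classes vanish in $H_3(B\widetilde{\mathrm{Diff}}_+^{\delta}S^1;\Z)$ — hence are genuinely ``new'' analytic phenomena — is precisely Theorem~\ref{th:phi} again.

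The main obstacle I expect is the bookkeeping that makes the surjection onto $\Q$ (rather than merely onto $\Z[\tfrac1p]/\Z$ for each $p$ separately, or onto some unidentified infinitely generated torsion group) actually land on all of $\Q/\Z$ and then get promoted to $\Q$: one must show the classes $(\varphi_k^{j})_*\mu(\sigma_0)-\mu(\sigma_0)$ are \emph{independent} in the right sense and are not accidentally annihilated by further relations in $H_3(B\widetilde{\mathrm{Diff}}_+^{\omega,\delta}S^1;\Z)$. This is where the hypothesis $\chi^2=0$ must be used not once but uniformly — via the displayed equivalent condition that $(\varphi_k)_*\mu(\sigma)\neq\mu(\sigma)$ for \emph{every} $\sigma$ with $\chi(\sigma)\neq0$ and \emph{every} $k\geq2$ — so that the whole tower of differences is nondegenerate simultaneously. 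Handling the interaction of the two different $k$'s (equivalently, assembling the $p$-local surjections $\Z[\tfrac1p]/\Z$ into a single $\Q/\Z$) is the delicate point, and I would isolate it as a small lemma about the monoid generated by the $\varphi_k$ acting on the $\mu$-image before assembling the final statement.
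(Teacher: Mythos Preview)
Your general architecture is close to the paper's: generate $P$ inside $H_3(B\tilde G^{\omega,\delta};\Z)/\mu(p_*H_2)$ from $\mu$ of a class with nonzero Euler number together with all its $(\varphi_k)_*$-translates, and use Theorem~\ref{th:phi} to see that the nontrivial part dies in the smooth group. But the heart of the argument --- producing the surjection onto $\Q$ --- is not there in your proposal, and the substitute you sketch would not work as stated.

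The paper does \emph{not} try to show the differences $(\varphi_k)_*\mu(\sigma_0)-\mu(\sigma_0)$ are ``independent'' or to assemble $p$-local pieces $\Z[\tfrac1p]/\Z$ into $\Q/\Z$. Instead it uses a precise eigenvalue computation: by Proposition~\ref{prop:phi} one has $(\varphi_k)_*\circ\mu=\mu\circ\tfrac1k\varphi_k^\Q$ on rational homology, and by Proposition~\ref{prop:phi_k^Q} the endomorphism $\varphi_k^\Q$ acts on the line $H_2(BG^{\omega,\delta};\Q)/p_*H_2(B\tilde G^{\omega,\delta};\Q)\cong\Q$ with eigenvalue $1/k$. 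Hence in $P\otimes_\Z\Q$ one has exactly $(\varphi_k)_*\mu(1)=\tfrac{1}{k^2}\mu(1)$, and the surjection $T_P$ is simply the natural map $P\to P\otimes_\Z\Q$, the latter being shown to be one-dimensional over $\Q$. Surjectivity then falls out of a one-line computation: $T_P\bigl(pq\,(\varphi_q)_*\mu(1)\bigr)=pq\cdot\tfrac{1}{q^2}=p/q$. Your appeal to ``the central $\Z$ is sent to $k\Z$'' gestures at this but does not yield the factor $1/k^2$, and without that factor you cannot hit arbitrary denominators.

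Your fallback --- using only that the differences are nonzero (via the contrapositive of Proposition~\ref{prop:chi}) and then hoping a monoid lemma assembles them into $\Q/\Z$ --- has a genuine gap: nonvanishing of each difference says nothing about their orders or their mutual relations, so they could in principle all be a single $2$-torsion element. The hypothesis $\chi^2=0$ is used in the paper not to get nonvanishing of many classes but to make $\mu$ \emph{injective} on the $\Z$-summand (so that $\mu(\Z)\cong\Z$ sits in $P$) and to ensure $\mu(\Z)\cap\mu(p_*H_2)=0$; all the divisibility comes from the eigenvalue identity, which holds unconditionally. Finally, working with the explicit $\sigma_0$ from Example~\ref{ex:canonical} (Euler number $3$) rather than a generator of the $\Z$-summand introduces a spurious factor of $3$ that you would then have to remove; the paper avoids this by fixing a non-canonical splitting $H_2(BG^{\omega,\delta};\Z)\cong p_*H_2(B\tilde G^{\omega,\delta};\Z)\oplus\Z$ from the start.
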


\begin{remark}
In general, we can prove the following statement.

Assume that $\chi^{k-1}\not=0\in H^{2k-2}(B\mathrm{Diff}_+^{\omega,\delta} S^1;\Q)$
and $\chi^{k}=0\in H^{2k}(B\mathrm{Diff}_+^{\omega,\delta} S^1;\Q)$
for some $k\geq 2$. Then we can conclude that the quotient group
$$
H_{2k-1}(B\widetilde{\mathrm{Diff}}_+^{\omega,\delta}S^1;\Z)
/\mu(p_\ast(H_{2k-2}(B\widetilde{\mathrm{Diff}}_+^{\omega,\delta}S^1;\Z)))
$$
contains a group $P\ (\supset \Z)$
which admits a surjective homomorphism onto $\Q$. This group $P$
vanishes in $H_{2k-1}(B\widetilde{\mathrm{Diff}}_+^{\delta}S^1;\Z)$
and there is a subgroup
$$
P/\Z\subset H_{2k-1}(B\mathrm{Diff}_+^{\omega,\delta}S^1;\Z)
$$
which admits a surjective homomorphism onto $\Q/\Z$.
\label{rem:Q}
\end{remark}

\vspace{2mm}
\begin{remark}
As shown in Dupont-Sah \cite{MR662760, MR722374} and Parry-Sah \cite{MR722372},
a similar phenomenon as in Theorem \ref{th:Q} 
occurs at the level of $\mathrm{SL}^\delta(2,\R)\subset \mathrm{Diff}_+^{\delta}S^1$.
Recall here that $\chi^2=0\in H^4(B\mathrm{SL}^\delta(2,\R);\Q)$).
More precisely, they proved that the following part of the Gysin exact sequence
for the central extension $
0\rightarrow \Z\rightarrow \widetilde{\mathrm{SL}}\rightarrow 
\mathrm{SL}\rightarrow 1
$
(where $\mathrm{SL}$ denotes $\mathrm{SL}(2,\R)$ for short)
contains a sub-exact sequence shown in the second row
below:
\begin{equation}
\begin{CD}
 H_2(B\mathrm{SL}^{\delta};\Z) @>{\mu}>> H_3(B\widetilde{\mathrm{SL}}^{\delta};\Z)
 @>>>H_3(B\mathrm{SL}^{\delta};\Z)  @>>>H_1(\mathrm{SL}^{\delta};\Z)=0\\
 @A{\cup}AA  @A{\cup}AA @A{\cup}AA @|\\
  \Z @>{\mu}>> \Q
  @>>> \Q/\Z@>>> 0.
\end{CD}
\label{eq:slQ}
\end{equation}
However, there is also a considerable difference between the two cases. The $\Q$-factor in
\eqref{eq:slQ} survives in $H_3(B\widetilde{\mathrm{Diff}}^\delta_+S^1;\Z)$
while the $\Q$-factor
in Theorem \ref{th:Q} vanishes there. This is because, the former one is
detected by the $\beta$ class $\in H^3(B\widetilde{\mathrm{Diff}}^\delta_+S^1;\R)$ 
(= Godbillon-Vey class) pulled back to $H^3(B\widetilde{\mathrm{SL}}^{\delta};\R)$
while the latter is not. On the other hand, 
the $\Q/\Z$-factor in \eqref{eq:slQ} is described as
$
H_3(\mathrm{SO}(2)_{\text{tor}};\Z)\cong H_3(\Q/\Z;\Z)\cong
\Q/\Z\
$
where $\mathrm{SO}(2)\subset \mathrm{SL}$ is the subgroup consisting of 
rotations and $\mathrm{SO}(2)_{\text{tor}}$ denotes its torsion subgroup.
Dupont and Sah \cite{MR662760, MR722374} proved that it is detected by
the Cheeger-Chern-Simons class $\hat{c}_2
$.
\label{rem:dsp}
\end{remark}

Recall here that 
$\mathrm{SO}(2)_{\text{tor}}\cong\Q/\Z
=\underset{{\underset{n}{\longrightarrow}}}{\lim} \frac{1}{n}\Z/\Z$ 
where $\frac{1}{n}\Z/\Z$ is the subgroup of  $\mathrm{SO}(2)_{\text{tor}}$ 
and also recall 
$H_{2k-1}(\mathrm{SO}(2)_{\text{tor}};\Z)\cong\Q/\Z$ 
where $\frac{1}{n}\Z/\Z=\Z/n\Z$ is realized 
by the natural flat $S^1$-bundle over the lens space $L_n^{2k-1}$. 

Motivated by the above result, we consider how the homology group
$H_*(\mathrm{SO}(2)_{\text{tor}};\Z)$ of the subgroup
$\mathrm{SO}(2)_{\text{tor}}\subset \mathrm{Diff}^\delta_+S^1$ consisting of rational rotations of $S^1$
will survive (or vanish) in the homology of various subgroups of $\mathrm{Diff}^\delta_+S^1$ containing 
$\mathrm{SO}(2)$, in particular $\mathrm{Diff}^\delta_+S^1$ itself and $\mathrm{Diff}^{\omega,\delta}_+S^1$.
As for the former $C^\infty$ case, we obtain the following vanishing result by using 
Thurston's Theorem \ref{th:Thurston}.

\begin{thm}
Let $\mathrm{SO}(2)_{\text{tor}}\subset \mathrm{Diff}^\delta_+S^1$ be the subgroup consisting of all the
rational rotations. Then
the homomorphisms
$$
H_{2k-1}(\mathrm{SO}(2)_{\text{tor}};\Z)\cong
\Q/\Z\longrightarrow H_{2k-1}(B\mathrm{Diff}^\delta_+S^1;\Z)\quad (k=2,3,\cdots)
\label{eq:3}
$$
are all trivial.
\label{th:other}
\end{thm}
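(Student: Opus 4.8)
The plan is to reduce the statement to one lens space at a time and then identify the resulting class inside the aspherical total space of the universal flat $S^1$-bundle.

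First I would reduce to finite cyclic subgroups. Since $\mathrm{SO}(2)_{\text{tor}}=\Q/\Z=\varinjlim_n\tfrac1n\Z/\Z$ and group homology commutes with direct limits, $H_{2k-1}(\mathrm{SO}(2)_{\text{tor}};\Z)\cong\Q/\Z$ is generated by the images of the classes of the subgroups $\tfrac1n\Z/\Z\cong\Z/n$; and classically $H_{2k-1}(\Z/n;\Z)\cong\Z/n$ is generated by $(\iota_n)_*[L_n^{2k-1}]$, where $\iota_n\colon L_n^{2k-1}\to B\Z/n$ classifies the universal cover of the lens space. Composing $\iota_n$ with $\Z/n=\tfrac1n\Z/\Z\hookrightarrow\mathrm{SO}(2)\subset\mathrm{Diff}_+^\delta S^1$ yields exactly the map $\kappa_n\colon L_n^{2k-1}\to B\mathrm{Diff}_+^\delta S^1$ classifying the flat $S^1$-bundle $E_n\to L_n^{2k-1}$ whose monodromy sends a generator of $\pi_1(L_n^{2k-1})$ to a generator of $\tfrac1n\Z/\Z\subset\mathrm{SO}(2)$. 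So it is enough to prove $(\kappa_n)_*[L_n^{2k-1}]=0$ in $H_{2k-1}(B\mathrm{Diff}_+^\delta S^1;\Z)$ for each $n\geq2$.

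The key point is then to understand the total spaces. Let $q_n\colon E_n\to L_n^{2k-1}$ be the projection. Because $2k-1\geq3$ we have $\pi_2(L_n^{2k-1})=\pi_2(S^{2k-1})=0$, so the homotopy exact sequence of $q_n$ exhibits $\pi_1(E_n)$ as a central extension of $\Z/n$ by $\pi_1(S^1)=\Z$ whose class in $H^2(\Z/n;\Z)\cong\Z/n$ is $\kappa_n^*\chi=e(E_n)$, the Euler class of the lens space's standard circle bundle, which is a generator of $H^2(L_n^{2k-1};\Z)\cong\Z/n$; hence the extension is the non-split one and $\pi_1(E_n)\cong\Z$ (in fact $E_n\cong S^{2k-1}\times S^1$). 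Let $\pi\colon\mathcal{E}\to B\mathrm{Diff}_+^\delta S^1$ be the universal flat $S^1$-bundle, so that $E_n=\kappa_n^*\mathcal{E}$, and let $\widetilde{\kappa}_n\colon E_n\to\mathcal{E}$ be the induced map of total spaces. Since $\mathrm{Diff}_+^\delta S^1$ is discrete and $S^1=K(\Z,1)$, the homotopy exact sequence of $\pi$ gives $\pi_i(\mathcal{E})=0$ for $i\geq2$, so $\mathcal{E}$ is aspherical (in fact $\mathcal{E}\simeq B\widetilde{\mathrm{Diff}}_+^\delta S^1$). As $\mathcal{E}$ is a $K(\pi_1,1)$ and $\pi_1(E_n)\cong\Z$, the map $\widetilde{\kappa}_n$ is homotopic to a composite $E_n\xrightarrow{u_n}B\pi_1(E_n)=S^1\to\mathcal{E}$ with $u_n$ inducing an isomorphism on $\pi_1$; therefore $(\widetilde{\kappa}_n)_*[E_n]$ factors through $H_{2k}(S^1;\Z)=0$ and is zero in $H_{2k}(\mathcal{E};\Z)$. (Alternatively, this vanishing follows from Theorem~\ref{th:Thurston}: $H\circ\widetilde{\kappa}_n\colon E_n\to\wedge B\overline{\Gamma}_1$ is adjoint to the classifying map $E_n\times S^1\to B\overline{\Gamma}_1$ of the codimension one foliation of the flat $S^1$-product pulled back along $\widetilde{\kappa}_n$; that foliation is a fibration over $S^1$, its leaves being finite coverings of $E_n$, so the classifying map factors through $S^1\to B\overline{\Gamma}_1$, which is null-homotopic because $B\overline{\Gamma}_1$ is $2$-connected; hence $H\circ\widetilde{\kappa}_n$ is null-homotopic and $(\widetilde{\kappa}_n)_*[E_n]=0$ since $H$ is a homology isomorphism.)

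To finish I would integrate over the circle fibre. This Gysin homomorphism is natural for the pullback square relating $q_n,\kappa_n,\widetilde{\kappa}_n$ and $\pi$, and it carries $[E_n]$ to $(q_n)_!\,[E_n]=[L_n^{2k-1}]$ because $q_n$ is an oriented circle bundle over a closed oriented manifold; hence
$$
(\kappa_n)_*[L_n^{2k-1}]=(\kappa_n)_*\bigl((q_n)_!\,[E_n]\bigr)=\pi_!\bigl((\widetilde{\kappa}_n)_*[E_n]\bigr)=\pi_!(0)=0,
$$
and together with the first reduction this gives the theorem. The step I expect to require the most care is the identification $\pi_1(E_n)\cong\Z$ — in particular, checking that the Euler class of $E_n$ generates $H^2(L_n^{2k-1};\Z)$, so that $\pi_1(E_n)$ is torsion-free and $B\pi_1(E_n)$ is a circle, which is exactly what kills $(\widetilde{\kappa}_n)_*[E_n]$; the naturality of the Gysin map, and (in the alternative argument) the verification that the pulled-back foliation is a genuine fibration over $S^1$, are routine.
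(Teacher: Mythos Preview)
Your reduction to lens spaces and your computation that $(\widetilde{\kappa}_n)_*[E_n]=0$ in $H_{2k}(\mathcal{E};\Z)=H_{2k}(B\widetilde{\mathrm{Diff}}_+^\delta S^1;\Z)$ are both correct; the paper records exactly this vanishing (even in the real analytic category) in the remark following the proof. The gap is in the final step. There is no natural ``integration along the fibre'' homomorphism $\pi_!\colon H_{2k}(\mathcal{E};\Z)\to H_{2k-1}(BG^\delta;\Z)$ for an $S^1$-bundle over a general base: the homology Gysin sequence supplies the transfer $\mu\colon H_{m-1}(B)\to H_m(E)$ and the projection $\pi_*\colon H_m(E)\to H_m(B)$, but no map $H_m(E)\to H_{m-1}(B)$. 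Your $(q_n)_!$ exists only because $L_n^{2k-1}$ and $E_n$ are closed oriented manifolds and you are tacitly using Poincar\'e duality on both; that has no analogue for $\mathcal{E}\to BG^\delta$. What naturality of $\mu$ actually gives you from $(\widetilde{\kappa}_n)_*[E_n]=0$ is only
\[
\mu\bigl((\kappa_n)_*[L_n^{2k-1}]\bigr)=(\widetilde{\kappa}_n)_*\bigl(\mu[L_n^{2k-1}]\bigr)=(\widetilde{\kappa}_n)_*[E_n]=0,
\]
hence $(\kappa_n)_*[L_n^{2k-1}]\in\Im\bigl(\cap\chi\colon H_{2k+1}(BG^\delta)\to H_{2k-1}(BG^\delta)\bigr)$, which is strictly weaker than vanishing.

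A sanity check confirms the gap: every ingredient you use (asphericity of $\mathcal{E}$, $\pi_1(E_n)\cong\Z$, the factorization of the classifying map of the foliation through $S^1$) holds verbatim for $\mathrm{Diff}_+^{\omega,\delta}S^1$; yet whether $i_*^\omega[L_n^{2k-1}]=0$ in $H_{2k-1}(B\mathrm{Diff}_+^{\omega,\delta}S^1;\Z)$ is precisely the open problem driving the paper (cf.\ Theorem~\ref{th:hatchi} and the corollary after it). So your argument cannot be complete as stated. The missing idea, supplied in the paper's proof, is the $S^1$-\emph{equivariance} of Thurston's map $H\colon B\tilde{G}^\delta\to\wedge B\overline{\Gamma}_1$: passing to Borel quotients yields a homology isomorphism $H/\!/S^1\colon BG^\delta\to\wedge B\overline{\Gamma}_1/\!/S^1$, and under this map the factorization through $S^1\subset D^2$ upgrades to a factorization $L_n^{2k-1}\to\wedge D^2/\!/S^1\simeq\mathbb{C}P^\infty\to\wedge B\overline{\Gamma}_1/\!/S^1$, which kills $[L_n^{2k-1}]$ since $H_{2k-1}(\mathbb{C}P^\infty;\Z)=0$. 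It is exactly this Borel-construction step (unavailable without the Mather--Thurston isomorphism) that separates the $C^\infty$ case from the real analytic one.
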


\begin{remark}
$\mathrm{(i)}\ $
By considering $\mathrm{SO}(2)_{\text{tor}}$,  
Nariman \cite{nariman} proved that the homomorphism 
\[
H^*(B\mathrm{Diff}_+ S^1;\Z)\cong H^*(\C P^{\infty};\Z) \rightarrow
H^*(B\mathrm{Diff}_+^{\omega,\delta} S^1;\Z)\]
is injective, by showing the fact that
\[
H^*(B\mathrm{Diff}_+^{\omega,\delta} S^1;\Z)  \rightarrow H^*(\mathrm{SO}(2)_{\text{tor}};\Z)\rightarrow H^*(\Z/n\Z;\Z)
\]
is surjective for any $\Z/n\Z \subset \mathrm{SO}(2)_{\text{tor}}$. 

In contract with this, we showed that the homology map
\[
H_*(\mathrm{SO}(2)_{\text{tor}};\Z) \rightarrow H_*(B\mathrm{Diff}_+^{\delta} S^1;\Z)
\]
is trivial. 
The problem is whether this still holds in $H_*(B\mathrm{Diff}_+^{\omega,\delta} S^1;\Z)$ or not. 

$\mathrm{(ii)}\ $
We also remark that if $B\overline{\Gamma}_1$ were an Eilenberg-MacLane 
space $K(\R,3)$,
then we can compute the homology group $H_{*}(B\mathrm{Diff}^\delta_+S^1;\Z)$
explicitly, again by using Thurston's Theorem \ref{th:Thurston} (see Remark \ref{rem:kr3}).
In particular, we see that it
has no torsion,
although this assumption would be too naive at present.
\end{remark}

On the other hand, 
we obtain the following
facts which may give another method of proving the non-triviality of 
the rational $\chi^2$
(and more generally the problem of determining whether the rational
$\chi^k=0$ for some $k$ or not) in the
real analytic case.
\begin{prop}
If the homomorphism
$$
H_3(\mathrm{SO}(2)_{\text{tor}};\Z)\cong
\Q/\Z\longrightarrow H_3(B\mathrm{Diff}^{\omega,\delta}_+S^1;\Z)
$$
is not injective, then $\chi^2\not=0 \in H^4(B\mathrm{Diff}^{\omega,\delta}_+S^1;\Q)$. 
\end{prop}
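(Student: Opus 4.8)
The plan is to prove the contrapositive: assuming $\chi^2=0\in H^4(B\mathrm{Diff}^{\omega,\delta}_+S^1;\Q)$, I will show that the map $\iota_\ast\colon H_3(\mathrm{SO}(2)_{\text{tor}};\Z)\cong\Q/\Z\to H_3(B\mathrm{Diff}^{\omega,\delta}_+S^1;\Z)$ induced by $\iota\colon \mathrm{SO}(2)_{\text{tor}}\hookrightarrow\mathrm{Diff}^{\omega,\delta}_+S^1$ is injective. The engine is a secondary (Cheeger--Chern--Simons type) class produced from the hypothesis, in the same spirit as the $\mathrm{SL}(2,\R)$ picture of Remark~\ref{rem:dsp}. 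Since $\chi^2$ vanishes rationally, the integral class $\chi^2\in H^4(B\mathrm{Diff}^{\omega,\delta}_+S^1;\Z)$ is torsion, hence lies in the image of the Bockstein $\partial\colon H^3(-;\Q/\Z)\to H^4(-;\Z)$ of $0\to\Z\to\Q\to\Q/\Z\to 0$; fix a lift $\widehat{\chi^2}\in H^3(B\mathrm{Diff}^{\omega,\delta}_+S^1;\Q/\Z)$ with $\partial\widehat{\chi^2}=\chi^2$.

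Next I would pin down the restriction of $\widehat{\chi^2}$ to $\mathrm{SO}(2)_{\text{tor}}\cong\Q/\Z$. There $\chi$ is already torsion: one has $\chi|_{\mathrm{SO}(2)_{\text{tor}}}=\partial(\widehat\chi)$ where $\widehat\chi=\mathrm{id}\in\Hom(\Q/\Z,\Q/\Z)=H^1(\Q/\Z;\Q/\Z)$ is the tautological character — equivalently, the restriction of the central extension \eqref{eq:ce} is $0\to\Z\to\Q\to\Q/\Z\to 0$, a generator of $\mathrm{Ext}^1(\Q/\Z,\Z)$. Using that the Bockstein commutes with the cup product by an integral class, $\partial(\widehat\chi\cup\chi)=\partial(\widehat\chi)\cup\chi=\chi^2|_{\mathrm{SO}(2)_{\text{tor}}}$; and since $H^3(\Q/\Z;\Q)=\Hom(\Q/\Z,\Q)=0$, the map $\partial$ is injective on $H^3(\Q/\Z;\Q/\Z)$, so $\widehat\chi\cup\chi$ is the unique class with that Bockstein. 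Therefore $\widehat{\chi^2}|_{\mathrm{SO}(2)_{\text{tor}}}=\widehat\chi\cup\chi$, and by naturality of evaluation
$$\langle\widehat\chi\cup\chi,\alpha\rangle=\langle\widehat{\chi^2},\iota_\ast\alpha\rangle\in\Q/\Z\qquad(\alpha\in H_3(\mathrm{SO}(2)_{\text{tor}};\Z)),$$
so $\iota_\ast\alpha=0$ forces $\langle\widehat\chi\cup\chi,\alpha\rangle=0$.

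The remaining step is to check that $\langle\widehat\chi\cup\chi,-\rangle\colon H_3(\mathrm{SO}(2)_{\text{tor}};\Z)\cong\Q/\Z\to\Q/\Z$ is injective; combined with the previous paragraph this makes $\iota_\ast$ injective. It is enough to treat each cyclic subgroup $\tfrac1n\Z/\Z\cong\Z/n\Z$, whose generator is carried by the natural flat $S^1$-bundle over the lens space $L^3_n$, i.e.\ by the map $L^3_n\to B(\Z/n\Z)\to B\mathrm{SO}(2)_{\text{tor}}$ inducing $\Z=H_3(L^3_n;\Z)\twoheadrightarrow H_3(\Z/n\Z;\Z)=\Z/n\Z$. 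Restricting to $\Z/n\Z$, the Bockstein of $\widehat\chi\cup\chi$ is $\chi^2|_{\Z/n\Z}$, a generator of $H^4(\Z/n\Z;\Z)=\Z/n\Z$ (in $H^\ast(\Z/n\Z;\Z)=\Z[y]/(ny)$, $|y|=2$, one has $\chi|_{\Z/n\Z}=(\text{unit})\,y$), while $\partial\colon H^3(\Z/n\Z;\Q/\Z)\xrightarrow{\ \sim\ }H^4(\Z/n\Z;\Z)$ is an isomorphism of cyclic groups of order $n$; hence $\widehat\chi\cup\chi$ restricts to a generator of $H^3(\Z/n\Z;\Q/\Z)\cong\Z/n\Z$. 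Evaluating that generator on the generator of $H_3(\Z/n\Z;\Z)$ under the perfect duality pairing $H^3(\Z/n\Z;\Q/\Z)\times H_3(\Z/n\Z;\Z)\to\Q/\Z$ produces an element of $\tfrac1n\Z/\Z$ of exact order $n$, which gives the required injectivity on $\tfrac1n\Z/\Z$.

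The point needing the most care is this last non-triviality: since $H_2(\Z/n\Z;\Z)=0$, the product $\widehat\chi\cup\chi$ is invisible to the coproduct on $H_\ast(\Z/n\Z;\Z)$, so its non-vanishing on $L^3_n$ is a genuinely secondary (Bockstein/Tor) phenomenon and must be extracted from the identity $\partial(\widehat\chi\cup\chi)=\chi^2$ rather than from a chain-level computation — this is exactly where the argument specializes Dupont--Sah's evaluation of $\widehat c_2$ on lens spaces behind \eqref{eq:slQ}. (Alternatively, the Proposition is closely related to, and can be recovered from, the construction used to prove Theorem~\ref{th:Q}, in which the subgroup $\iota_\ast(\Q/\Z)$ is precisely the image of $P/\Z$.)
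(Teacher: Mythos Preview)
Your proof is correct and follows essentially the same route as the paper: the paper deduces the Proposition from Theorem~\ref{th:hatchi}(ii) (the case $\Gamma=\mathrm{Diff}^{\omega,\delta}_+S^1$, $k=2$), whose proof constructs the secondary class $\widehat{\chi^k}\in H^{2k-1}(\Gamma;\Q/\Z)$, restricts it to $\mathrm{SO}(2)_{\text{tor}}$ where the rational indeterminacy disappears, and observes that the resulting class gives an isomorphism $H_{2k-1}(\mathrm{SO}(2)_{\text{tor}};\Z)\cong\Q/\Z\to\Q/\Z$---exactly your Bockstein/cup-product argument, carried out with a bit more explicit detail on each $\Z/n\Z$. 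One caveat: your closing parenthetical identifying $\iota_\ast(\Q/\Z)$ with the image of $P/\Z$ from Theorem~\ref{th:Q} is not something the paper asserts (Remark~\ref{rem:dsp} in fact distinguishes the two kinds of torsion), so that alternative route would need separate justification.
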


In fact, this is a particular case of the following more general facts.

\begin{thm}
$\mathrm{(i)}$\ Let $\Gamma\subset \mathrm{Diff}^\delta_+S^1$ be any subgroup containing 
$\mathrm{SO}(2)$. Assume that the homomorphism
$$
i_*:H_{2k-1}(\mathrm{SO}(2)_{\text{tor}};\Z)\cong
\Q/\Z\longrightarrow H_{2k-1}(\Gamma;\Z)
$$
is injective (resp. non-trivial) for some $k$, where $i:\mathrm{SO}(2)_{\text{tor}}\subset \Gamma$
denotes the inclusion. Then the homomorphisms
$$
i_*: H_{2l-1}(\mathrm{SO}(2)_{\text{tor}};\Z)\cong
\Q/\Z\longrightarrow H_{2l-1}(\Gamma;\Z)
$$
are injective (resp. non-trivial) for all $l\geq k$.

$\mathrm{(ii)}$\
Assume that $\chi^{k}=0\in H^{2k}(\Gamma;\Q)$. Then the homomorphisms
$$
i_*: H_{2l-1}(\mathrm{SO}(2)_{\text{tor}};\Z)\cong
\Q/\Z\longrightarrow H_{2l-1}(\Gamma;\Z)
$$
are injective for all $l\geq k$.
\label{th:hatchi}
\end{thm}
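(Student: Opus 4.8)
The plan is to reduce the whole statement to the finite cyclic subgroups $\Z/n\Z=\tfrac1n\Z/\Z\subset\mathrm{SO}(2)_{\text{tor}}$, whose classifying spaces $B\Z/n\Z=L_n^{\infty}$ have completely explicit (co)homology. Write $f_n\colon B\Z/n\Z\to B\Gamma$ for the map induced by $\Z/n\Z\hookrightarrow\mathrm{SO}(2)_{\text{tor}}\subset\Gamma$. Since group homology commutes with the directed colimit $\Q/\Z=\varinjlim_n\tfrac1n\Z/\Z$, in each degree $2j-1$ the map $i_*$ is the colimit of
$$
(f_n)_*\colon H_{2j-1}(\Z/n\Z;\Z)\cong\Z/n\Z\longrightarrow H_{2j-1}(\Gamma;\Z),
$$
where $H_{2j-1}(\Z/n\Z;\Z)$ maps isomorphically onto $\tfrac1n\Z/\Z\subset\Q/\Z$; hence $i_*$ is injective in degree $2j-1$ iff every $(f_n)_*$ is injective, and non-trivial in degree $2j-1$ iff some $(f_n)_*$ is non-trivial. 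The one geometric input I would use is that $f_n^{*}\chi$ is a generator $c_n$ of $H^2(\Z/n\Z;\Z)\cong\Z/n\Z$: restricted to the $3$-skeleton it is the Euler class of the natural flat $S^1$-bundle over $L_n^3$, whose total space is diffeomorphic to $S^3\times S^1$, so the Gysin sequence forces this Euler class to generate $H^2(L_n^3;\Z)$. Consequently $c_n^{j}$ generates $H^{2j}(\Z/n\Z;\Z)\cong\Z/n\Z$ for all $j\ge1$, and $-\cap c_n\colon H_{2j-1}(\Z/n\Z;\Z)\to H_{2j-3}(\Z/n\Z;\Z)$ is an isomorphism for $j\ge2$ — this is Poincaré duality on the closed oriented manifold $L_n^{2j-1}$ composed with the skeletal inclusion, taking the fundamental class $[L_n^{2j-1}]$ to a generator.

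For part $\mathrm{(i)}$ I would fix $n$ and $l\ge k$ and apply naturality of the cap product: for $\xi\in H_{2l-1}(\Z/n\Z;\Z)$,
$$
(f_n)_*(\xi)\cap\chi^{l-k}=(f_n)_*\bigl(\xi\cap c_n^{l-k}\bigr)\in H_{2k-1}(\Gamma;\Z),
$$
where $\xi\mapsto\xi\cap c_n^{l-k}$ is an isomorphism $H_{2l-1}(\Z/n\Z;\Z)\xrightarrow{\ \sim\ }H_{2k-1}(\Z/n\Z;\Z)$ by the previous paragraph. If $(f_n)_*$ is injective in degree $2k-1$ and $(f_n)_*(\xi)=0$, then $(f_n)_*\bigl(\xi\cap c_n^{l-k}\bigr)=0$, so $\xi\cap c_n^{l-k}=0$, so $\xi=0$; thus $(f_n)_*$ is injective in degree $2l-1$. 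If instead $(f_n)_*$ is non-trivial in degree $2k-1$, choose $\eta$ with $(f_n)_*(\eta)\ne0$, set $\xi=(-\cap c_n^{l-k})^{-1}(\eta)$; then $(f_n)_*(\xi)\cap\chi^{l-k}=(f_n)_*(\eta)\ne0$, so $(f_n)_*(\xi)\ne0$. Ranging over $n$ gives $\mathrm{(i)}$.

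For part $\mathrm{(ii)}$, by $\mathrm{(i)}$ it suffices to show $i_*$ is injective in degree $2k-1$, i.e.\ that every $(f_n)_*\colon\Z/n\Z\to H_{2k-1}(\Gamma;\Z)$ is injective. Since $\chi^k=0$ in $H^{2k}(\Gamma;\Q)$, the Bockstein long exact sequence of $0\to\Z\to\Q\to\Q/\Z\to0$ produces $Y\in H^{2k-1}(\Gamma;\Q/\Z)$ with $\tilde\beta(Y)=\chi^k\in H^{2k}(\Gamma;\Z)$, where $\tilde\beta$ is the connecting homomorphism. Then $\tilde\beta(f_n^{*}Y)=(f_n^{*}\chi)^k=c_n^{k}$ generates $H^{2k}(\Z/n\Z;\Z)\cong\Z/n\Z$; and since the rational cohomology of the finite group $\Z/n\Z$ vanishes in positive degrees, $\tilde\beta\colon H^{2k-1}(\Z/n\Z;\Q/\Z)\to H^{2k}(\Z/n\Z;\Z)$ is an isomorphism, so $f_n^{*}Y$ generates $H^{2k-1}(\Z/n\Z;\Q/\Z)\cong\Hom(\Z/n\Z,\Q/\Z)\cong\Z/n\Z$. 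Hence $\langle f_n^{*}Y,-\rangle\colon H_{2k-1}(\Z/n\Z;\Z)\cong\Z/n\Z\to\Q/\Z$ is injective, and by naturality of the Kronecker pairing $\langle Y,(f_n)_*(\xi)\rangle=\langle f_n^{*}Y,\xi\rangle$, so $(f_n)_*(\xi)\ne0$ whenever $\xi\ne0$. This proves $\mathrm{(ii)}$.

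The main obstacle is really the geometric input: that $\chi$ restricts to a generator of $H^2(\Z/n\Z;\Z)$ — equivalently that the natural flat $S^1$-bundle over the lens space has Euler class a generator — and, crucially for $\mathrm{(ii)}$, its consequence that $\chi^k$ (not merely $\chi$) restricts to a generator of $H^{2k}(\Z/n\Z;\Z)$, which is exactly what lets the Bockstein dual $Y$ detect every lens-space class simultaneously. Everything else is naturality of the cap and Kronecker products, the standard ring structure of $H^{*}(\Z/n\Z;\Z)$, and Poincaré duality on lens spaces.
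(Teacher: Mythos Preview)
Your proof is correct and follows essentially the same strategy as the paper's. For (i), the paper compares the Gysin sequences of the central extensions $0\to\Z\to\Q\to\Q/\Z\to0$ and $0\to\Z\to\tilde\Gamma\to\Gamma\to0$ to get exactly your projection-formula argument (their $\cap i_0^*\chi$ being an isomorphism is your $\cap c_n$ being an isomorphism, just passed to the colimit); for (ii), your Bockstein lift $Y$ is precisely the paper's secondary class $\widehat{\chi^k}$ of Definition~\ref{def:sc}(i), and both finish by the same Kronecker-pairing naturality.
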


\begin{remark}
If we combine Theorem \ref{th:other} and Theorem \ref{th:hatchi} (ii) for the case 
$\Gamma=\mathrm{Diff}^{\delta}_+S^1$, we obtain yet another proof of the fact that
all the powers of the rational Euler class are non-trivial in $H^*(B\mathrm{Diff}^{\delta}_+S^1;\Q)$
(see \cite{MR759480}\cite{gs}\cite{nariman} for former proofs).
\end{remark}

\begin{example}
$\mathrm{(i)}$\ 
The results of Dupont, Sah and Parry mentioned in Remark \ref{rem:dsp}
together with Theorem \ref{th:hatchi}\ $\mathrm{(i)}$ show that the
homomorphisms
$$
i_*:H_{2k-1}(\mathrm{SO}(2)_{\text{tor}};\Z)\cong
\Q/\Z\longrightarrow H_{2k-1}(B\mathrm{SL}^\delta (2,\R);\Z)
$$
are injective for all $k\geq 2$.

$\mathrm{(ii)}$\ If we assume that Thurston's lost theorem
(see Remark \ref{rem:f} below) holds:
$$
\chi^3=0\in H^6(B\mathrm{Diff}_+^{\omega,\delta} S^1;\Q),
$$
then Theorem \ref{th:hatchi}\ $\mathrm{(ii)}$ implies that
$$
i_*:H_{2k-1}(\mathrm{SO}(2)_{\text{tor}};\Z)\cong
\Q/\Z\longrightarrow H_{2k-1}(B\mathrm{Diff}_+^{\omega,\delta} S^1;\Z)
$$
are injective for all $k\geq 3$.
\end{example}

More generally, if some power $\chi^k$ of the Euler class vanishes rationally
for real analytic flat $S^1$-bundles, then by Remark \ref{rem:Q}, Theorem \ref{th:hatchi},
and Theorem \ref{th:other},
we can conclude that 
$H_{*}(B\mathrm{Diff}_+^{\omega,\delta} S^1;\Z)$ should have numerous torsion homology classes
which vanish in $H_{*}(B\mathrm{Diff}_+^{\delta} S^1;\Z)$.

On the other hand, we have the following.

\begin{cor}
Let $n\geq 2$ be a natural number and consider the subgroup 
$\Z/n\Z\subset \mathrm{Diff}^{\omega,\delta}_+S^1$ generated by the $1/n$-rotation of $S^1$.
Assume that the homomorphism
$$
H_{2k-1}(\Z/n\Z;\Z)\cong \Z/n\Z\rightarrow H_{2k-1}(B\mathrm{Diff}^{\omega,\delta}_+S^1;\Z)
$$
is trivial. Then 
$$
\chi^{k}\not=0\in H^{2k}(B\mathrm{Diff}^{\omega,\delta}_+S^1;\Q).
$$
\end{cor}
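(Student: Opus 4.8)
The plan is to deduce this corollary as the contrapositive of Theorem \ref{th:hatchi}\,(ii) applied to $\Gamma=\mathrm{Diff}^{\omega,\delta}_+S^1$, after first reducing the hypothesis about the cyclic subgroup $\Z/n\Z$ to a statement about the full torsion subgroup $\mathrm{SO}(2)_{\text{tor}}$. So the argument has only two ingredients: the behaviour of group homology under the colimit $\mathrm{SO}(2)_{\text{tor}}=\varinjlim_n\frac1n\Z/\Z$, and Theorem \ref{th:hatchi}\,(ii) itself, which we are free to invoke.

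First I would suppose, for contradiction, that $\chi^k=0\in H^{2k}(B\mathrm{Diff}^{\omega,\delta}_+S^1;\Q)$. Then Theorem \ref{th:hatchi}\,(ii) with $\Gamma=\mathrm{Diff}^{\omega,\delta}_+S^1$ gives that
$$
i_*:H_{2k-1}(\mathrm{SO}(2)_{\text{tor}};\Z)\cong\Q/\Z\longrightarrow H_{2k-1}(B\mathrm{Diff}^{\omega,\delta}_+S^1;\Z)
$$
is injective. Next I would observe that the subgroup $\Z/n\Z$ generated by the $1/n$-rotation is exactly $\frac1n\Z/\Z\subset\mathrm{SO}(2)_{\text{tor}}$, and that the inclusion induces on $(2k-1)$-st homology the standard inclusion $\Z/n\Z=\frac1n\Z/\Z\hookrightarrow\Q/\Z$. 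This is because group homology commutes with the directed colimit $\mathrm{SO}(2)_{\text{tor}}=\varinjlim_n\frac1n\Z/\Z$, each $H_{2k-1}(\frac1n\Z/\Z;\Z)\cong\Z/n\Z$ in the odd degrees, and the transition maps are the divisibility inclusions — as recorded by the lens-space realization of $\frac1n\Z/\Z=\Z/n\Z$ by $L^{2k-1}_n$ noted before Theorem \ref{th:other}. In particular this map is injective. Composing, the natural map
$$
H_{2k-1}(\Z/n\Z;\Z)\cong\Z/n\Z\longrightarrow H_{2k-1}(\mathrm{SO}(2)_{\text{tor}};\Z)\cong\Q/\Z\xrightarrow{\ i_*\ }H_{2k-1}(B\mathrm{Diff}^{\omega,\delta}_+S^1;\Z)
$$
is injective, and since $n\geq 2$ it is in particular non-trivial. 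This contradicts the hypothesis that $H_{2k-1}(\Z/n\Z;\Z)\to H_{2k-1}(B\mathrm{Diff}^{\omega,\delta}_+S^1;\Z)$ is trivial, and therefore $\chi^k\neq 0\in H^{2k}(B\mathrm{Diff}^{\omega,\delta}_+S^1;\Q)$.

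I do not expect a real obstacle here: all the substance is carried by Theorem \ref{th:hatchi}\,(ii). The only point demanding any care is the identification, and injectivity, of the map $H_{2k-1}(\Z/n\Z;\Z)\to H_{2k-1}(\mathrm{SO}(2)_{\text{tor}};\Z)$; this follows from the colimit description of $\mathrm{SO}(2)_{\text{tor}}$ and the classical computation of the integral homology of finite cyclic groups (equivalently of infinite lens spaces) in odd degrees, together with the fact that the transition maps in the system $\{H_{2k-1}(\frac1n\Z/\Z;\Z)\}_n$ are the injections $\Z/n\Z\hookrightarrow\Z/m\Z$, $1\mapsto m/n$, for $n\mid m$.
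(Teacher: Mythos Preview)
Your argument is correct and is exactly the intended one: the paper states this corollary without proof immediately after Theorem \ref{th:hatchi}, meaning it is to be read as the contrapositive of part (ii) for $\Gamma=\mathrm{Diff}^{\omega,\delta}_+S^1$, combined with the injectivity of $H_{2k-1}(\Z/n\Z;\Z)\hookrightarrow H_{2k-1}(\mathrm{SO}(2)_{\text{tor}};\Z)\cong\Q/\Z$ recalled just before Theorem \ref{th:other}. Your verification of that injectivity via the directed colimit and the transition maps $\Z/n\Z\hookrightarrow\Z/m\Z$ is the right justification.
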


\begin{remark}
Mather \cite{MR283817, MR356085, MR356129} 
proved that $B\overline{\Gamma}_1$ is $2$-connected. On the other hand,
Haefliger \cite{MR100269} proved that $B\overline{\Gamma}^\omega_1$ is a $K(\pi,1)$ space
and $H_1(B\overline{\Gamma}^\omega_1;\Z)=0$. Thus the homotopy types of $B\overline{\Gamma}_1$
and $B\overline{\Gamma}^\omega_1$ are drastically different from each other.
Nevertheless, nothing is known at present
whether the natural map $B\overline{\Gamma}^\omega_1\rightarrow B\overline{\Gamma}_1$
induces isomorphism on homology or not.
Tsuboi \cite{MR3726712} proposed
to study $H_2(B\overline{\Gamma}^\omega_1;\Z)$. 

Also, it is an extremely difficult open problem to determine whether the
homomorphism
$$
H^*_{GF}(\mathcal{X}_{S^1},\mathrm{SO}(2))\cong \R[\alpha,\chi]/(\alpha\chi)\rightarrow H^*(B\mathrm{Diff}_+^{\omega,\delta}S^1;\R)
$$
from the Gel'fand-Fuchs cohomology of $S^1$ relative to $\mathrm{SO}(2)$
to the cohomology of $B\mathrm{Diff}_+^{\omega,\delta}S^1$
is injective or not. For
the case of $C^\infty$ diffeomorphism group, it was proved in \cite{MR759480} 
that it is injective. However, in the real analytic case, the only known results
concerning this problem is due to Thurston (\cite{MR298692}) who proved the continuous variability
of the $\alpha$-class and hence the linear independence of the classes
$\alpha,\chi$ (cf. Table \ref{tab:gysin} below).
The present work is a tiny attempt to attack this problem, in particular the
question of 
non-triviality of the rational $\chi^2$. Note that Ghys \cite{ghys1, ghys2} mentioned a tale of, what he called,
Thurston's lost theorem saying that $\chi^3=0\in H^6(B\mathrm{Diff}_+^{\omega,\delta}S^1;\Q)$.
See also Nariman \cite{nariman} where the author showed that any power of the {\it integral}
Euler class is non-trivial in $H^*(B\mathrm{Diff}_+^{\omega,\delta}S^1;\Z)$. 
These are our main motivation for this work.
\label{rem:f}
\end{remark}

\begin{table}[h]
\caption{$\text{Gysin exact sequence where
 $G=\mathrm{Diff}_+S^1, \tilde{G}=\widetilde{\mathrm{Diff}}_+S^1$}$}
\begin{center}
\begin{tabular}{|c|c|c|c|}
\noalign{\hrule height0.8pt}
\hfil $$ & $\text{smooth case}$ 
& {}  & \text{real analytic case} \\
\hline
$H_4(BG^\delta;\Z)$ & $\overset{(\alpha^\xi\alpha^\eta,\chi^2)}{\twoheadrightarrow}\ S^2_\Q(\R)\oplus\Z$ & $$ & 
$\overset{(\alpha^\xi\alpha^\eta,\chi^2)}{\rightarrow}\ S^2_\Q(\R)\oplus\Z
\ \text{what is the image?}$ \\
\hline
$\downarrow\cap\chi$ & $\downarrow$ & $$ & $\downarrow$ \\
\hline
$H_2(BG^\delta;\Z)$ & $\cong H_2(\Omega B\overline{\Gamma}_1;\Z)\oplus \Z
\overset{(\alpha,\chi)}{\twoheadrightarrow}\ \R\oplus\Z$ & $$ & 
$\overset{?}{\cong} H_2(\Omega B\Gamma_H^+;\Z)\oplus\Z\overset{(\alpha,\chi)}{\twoheadrightarrow}\ \R\oplus\Z$ \\
\hline
$\downarrow\mu$ & $\downarrow (\cong,\hspace{1cm} 0)\hspace{1cm}$ & $$ & $\downarrow$ \\                                                          
\hline 
$H_3(B\tilde{G}^\delta;\Z)$ & $\cong H_3(B\overline{\Gamma}_1;\Z)\oplus H_3(\Omega B\overline{\Gamma}_1;\Z)
\overset{\beta}{\twoheadrightarrow}\ \R$  & $$ & $\overset{?}{\cong}H_3(B\Gamma_H;\Z) \overset{\beta}{\twoheadrightarrow}\ \R$ \\
\hline 
$\downarrow p_*$ & $\downarrow$  & $$ & $\downarrow$ \\
\hline 
$H_3(BG^\delta;\Z)$ & $\cong H_3(\Omega B\overline{\Gamma}_1;\Z)\overset{?}{=}0$   & $$ & 
$\overset{?}{=}0$ \\
\hline 
$\downarrow\cap\chi$ & $\downarrow$   & $$ & $\downarrow$ \\
\hline 
$H_1(BG^\delta;\Z)$ & $0$  & $$ & $0$ \\
\hline 
$\downarrow \mu$ & $\downarrow$   & $$ & $\downarrow$ \\
\hline 
$H_2(B\tilde{G}^\delta;\Z)$ & $\cong H_2(\Omega B\overline{\Gamma}_1;\Z)
\overset{\alpha}{\twoheadrightarrow}\R$   & $$ & 
$\overset{?}{\cong} H_2(\Omega B\Gamma_H^+;\Z)
\overset{\alpha}{\twoheadrightarrow}\R$ \\
\hline 
$\downarrow p_*$ & $\downarrow$   & $$ & $\downarrow$ \\
\hline 
$H_2(BG^\delta;\Z)$ & $\cong H_2(\Omega B\overline{\Gamma}_1;\Z)\oplus \Z
\overset{(\alpha,\chi)}{\twoheadrightarrow}\ \R\oplus\Z$ & $$ & $\overset{(\alpha,\chi)}{\twoheadrightarrow}\ \R\oplus\Z$ \\
\hline 
$\downarrow\cap\chi$ & $\downarrow$   & $$ & $\downarrow$ \\
\hline 
$H_0(BG^\delta;\Z)$ & $\Z$  & $$ & $\Z$ \\
\hline 
$\downarrow\mu$ & $\downarrow$   & $$ & $\downarrow$ \\
\hline 
$H_1(BG^\delta;\Z)$ & $0$  & $$ & $0$ \\
\noalign{\hrule height0.8pt}
\end{tabular}
\end{center}
\label{tab:gysin}
\end{table}

\newpage
 
\vspace{2mm}
\section{Proofs}
In this section, we denote the group
$\mathrm{Diff}_+S^1$ (resp. $\widetilde{\mathrm{Diff}}_+S^1$)
by $G$ (resp. $\tilde{G}$) 
for simplicity.
First we prepare 
a few facts.

\begin{prop}
Let $X$ be a $2$-connected
topological space and let $\wedge X$ denote the
free loop space of $X$. Then we have
\begin{align*}
H_2(\wedge X;\Z)&\cong
H_2(\Omega X;\Z)\cong H_3(X;\Z)\\
H_3(\wedge X;\Z)&\cong H_3(X;\Z)
\oplus H_3(\Omega X;\Z).
\end{align*}
\label{prop:gh23}
\end{prop}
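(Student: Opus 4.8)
The plan is to run the homological Serre spectral sequence of the evaluation fibration and to use the section by constant loops to annihilate the differentials that would otherwise spoil the computation.

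First I would set up the Serre fibration $\Omega X\to\wedge X\xrightarrow{\mathrm{ev}}X$ given by evaluation at the basepoint, which admits a section $c\colon X\to\wedge X$ sending $x$ to the constant loop at $x$. Since $X$ is $2$-connected we have $\pi_1(X)=0$, so the relevant local coefficient system is trivial, and $\Omega X$ is simply connected; the Hurewicz theorem applied to $\Omega X$ and to $X$ then gives $H_2(\Omega X;\Z)\cong\pi_2(\Omega X)\cong\pi_3(X)\cong H_3(X;\Z)$, which is the middle isomorphism in the statement. I also record $H_1(\Omega X;\Z)=0$ and $H_1(X;\Z)=H_2(X;\Z)=0$, so that in what follows it remains only to identify $H_2(\wedge X)$ and $H_3(\wedge X)$.

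Next I would analyze $E^2_{p,q}=H_p(X;H_q(\Omega X))\Rightarrow H_{p+q}(\wedge X)$ in total degrees $2$ and $3$. The key general point is that the section forces every differential leaving the bottom row to vanish: the edge homomorphism $\mathrm{ev}_\ast\colon H_n(\wedge X)\to H_n(X)$ has image $E^\infty_{n,0}\subseteq E^2_{n,0}=H_n(X)$, and since $\mathrm{ev}_\ast c_\ast=\mathrm{id}$ this image is all of $H_n(X)$, whence $E^\infty_{n,0}=E^2_{n,0}$ and no $d^r$ out of $(n,0)$ is nonzero. Moreover the row $q=1$ is identically zero (as $H_1(\Omega X)=0$), and the universal coefficient theorem together with $H_1(X)=H_2(X)=0$ and freeness of $H_0(X)=\Z$ makes $E^2_{1,2}=E^2_{2,1}=E^2_{2,2}=0$. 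Consequently, in total degree $2$ the only surviving term is $E^\infty_{0,2}=E^2_{0,2}=H_2(\Omega X)$ — the one potential incoming differential $d^3\colon E^3_{3,0}\to E^3_{0,2}$ leaves the bottom row, hence vanishes — giving $H_2(\wedge X;\Z)\cong H_2(\Omega X;\Z)\cong H_3(X;\Z)$. In total degree $3$ the surviving terms are $E^\infty_{3,0}=H_3(X)$ and $E^\infty_{0,3}=H_3(\Omega X)$ (each of $E^2_{2,2}$ and $E^3_{3,1}$ vanishes, and $d^4\colon E^4_{4,0}\to E^4_{0,3}$ leaves the bottom row), producing a short exact sequence $0\to H_3(\Omega X;\Z)\to H_3(\wedge X;\Z)\xrightarrow{\mathrm{ev}_\ast}H_3(X;\Z)\to 0$ which is split by $c_\ast$; hence $H_3(\wedge X;\Z)\cong H_3(X;\Z)\oplus H_3(\Omega X;\Z)$.

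The only genuinely delicate point is the role of the section in killing the differentials out of the bottom row, most notably $d^3\colon E^3_{3,0}\to E^3_{0,2}$: without it, $H_2(\wedge X)$ could a priori be a proper quotient of $H_2(\Omega X)$. Everything else is bookkeeping with the low-degree vanishing of the homology of $X$ and of $\Omega X$ forced by $2$-connectivity, together with the splitting of the degree-$3$ extension, which is immediate from $\mathrm{ev}\circ c=\mathrm{id}_X$.
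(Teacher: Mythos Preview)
Your argument is correct and follows exactly the approach the paper indicates: the Serre spectral sequence of the evaluation fibration $\Omega X\to\wedge X\to X$, using the section by constant loops to kill differentials off the bottom row, together with Hurewicz for the identification $H_2(\Omega X;\Z)\cong\pi_2(\Omega X)\cong\pi_3(X)\cong H_3(X;\Z)$. The paper records this only as a two-line sketch (``the usual spectral sequence arguments\ldots which has a section''); you have simply filled in the bookkeeping it leaves implicit.
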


\begin{proof}
This can be shown by the usual spectral sequence arguments
applied to the fibration
$
\Omega X\rightarrow \wedge X\rightarrow X
$
which has a section. The isomorphism $H_2(\Omega X;\Z)\cong H_3(X;\Z)$
is induced by the theorem of Hurewicz as
$$
H_2(\Omega X;\Z)\cong \pi_2(\Omega X)\cong \pi_3(X)\cong H_3(X;\Z).
$$
\end{proof}

\begin{prop}[well known, see for example \cite{MR2039760}]
$$
H_*(\wedge S^3;\Z)\cong H_*(\Omega S^3;\Z)\otimes H_*(S^3;\Z)
\cong \Z[\alpha]\otimes\wedge (\beta)
$$
where $\alpha\in H_2(\Omega S^3;\Z)\cong\Z$ and $\beta\in H_3(S^3;\Z)\cong\Z$
are generators.
\label{prop:wk}
\end{prop}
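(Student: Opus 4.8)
The plan is to leverage the fact that $S^3$ is a Lie group — namely $SU(2)=Sp(1)$ — which causes its free loop space to split off the based loop space; after that the proposition is just the Künneth theorem together with the classical computation of $H_*(\Omega S^3;\Z)$.

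First I would record the elementary observation that for \emph{any} topological group $G$ with identity $e$, fixing a basepoint $\ast\in S^1$, the map
$$
\wedge G\xrightarrow{\ \cong\ }\Omega G\times G,\qquad \gamma\longmapsto\bigl(\,t\mapsto\gamma(t)\gamma(\ast)^{-1},\ \gamma(\ast)\,\bigr)
$$
is a homeomorphism, with continuous inverse $(\ell,g)\mapsto\bigl(t\mapsto\ell(t)g\bigr)$; note that $t\mapsto\gamma(t)\gamma(\ast)^{-1}$ is indeed a loop based at $e$. Applying this to $G=S^3$ yields $\wedge S^3\cong\Omega S^3\times S^3$.

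Next, since $H_*(S^3;\Z)\cong\wedge(\beta)$ with $|\beta|=3$ is a free graded abelian group, the Künneth theorem carries no $\mathrm{Tor}$ terms and gives
$$
H_*(\wedge S^3;\Z)\cong H_*(\Omega S^3;\Z)\otimes_{\Z} H_*(S^3;\Z)\cong H_*(\Omega S^3;\Z)\otimes\wedge(\beta).
$$
Then I would invoke the classical fact (James) that $H_*(\Omega S^3;\Z)$ is free of rank one in every non-negative even degree and vanishes in odd degrees, i.e. it is $\Z[\alpha]$ with $|\alpha|=2$ as a graded abelian group. For completeness this can be proved from the Serre spectral sequence of the path–loop fibration $\Omega S^3\to PS^3\to S^3$: the $E^2$-page $H_p(S^3;H_q(\Omega S^3))$ is supported in the columns $p=0,3$ only, so the one possibly nonzero differential is $d_3\colon E^3_{3,q}\to E^3_{0,q+2}$; contractibility of $PS^3$ forces this $d_3$ to be an isomorphism for every $q$, and since $\Omega S^3$ is path-connected and simply connected, induction on $q$ gives $H_q(\Omega S^3;\Z)\cong\Z$ for $q$ even and $0$ for $q$ odd. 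Substituting into the displayed Künneth isomorphism produces $H_*(\wedge S^3;\Z)\cong\Z[\alpha]\otimes\wedge(\beta)$, as asserted.

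I do not expect a real obstacle here, but two points deserve care. One is notational: ``$\Z[\alpha]$'' must be understood as the underlying graded abelian group of $H_*(\Omega S^3;\Z)$ — a single $\Z$ in each even degree — and not as an assertion about the Pontryagin product, which actually makes $H_*(\Omega S^3;\Z)$ a divided power algebra rather than a polynomial ring; only the additive structure is claimed. The other: if one preferred to bypass the Lie-group splitting and run the Serre spectral sequence directly for $\Omega S^3\to\wedge S^3\to S^3$, one would again find it concentrated in columns $0$ and $3$ with only $d_3$ available; the section by constant loops kills $d_3$ on the bottom row, and in fact $d_3$ vanishes entirely because $S^3$ is an H-space and hence has vanishing Whitehead square — but one would then still need to resolve a possible extension problem in each total degree, precisely the issue the homeomorphism $\wedge S^3\cong\Omega S^3\times S^3$ renders vacuous. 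For that reason the Lie-group argument is the cleanest route.
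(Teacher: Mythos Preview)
Your argument is correct. The splitting $\wedge G\cong\Omega G\times G$ for a topological group $G$, applied to $G=S^3$, together with K\"unneth and the standard Serre spectral sequence computation of $H_*(\Omega S^3;\Z)$, yields the statement cleanly. Your remark distinguishing the additive statement $\Z[\alpha]$ from the Pontryagin ring structure (a divided power algebra) is well taken and worth keeping.

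As for comparison with the paper: there is nothing to compare. The paper does not prove this proposition; it is stated as well known with a citation to Cohen--Jones--Yan and no argument is given. Your proof is therefore not an alternative route but a complete justification where the paper simply defers to the literature. If anything, your approach is more direct than what one finds in the cited reference, which treats the loop homology algebra in greater generality via string topology; for the bare additive statement needed here, the Lie-group splitting is the most efficient path.
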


\begin{thm}[Haefliger \cite{privcom}, 
Nariman \cite{nariman}, see Remark 1.14.]
For any $k$, there exists certain element $\sigma_k\in H_{2k}(BG^\delta;\Z)$
such that $\chi^k(\sigma_k)=1$.
\label{prop:chi1}
\end{thm}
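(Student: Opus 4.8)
The plan is to reduce the statement to two facts that are either elementary or already available in the paper: that $\chi^k$ restricts to a generator of $H^{2k}(B\Z/p\Z;\Z)$ for every prime $p$, and that, by Theorem \ref{th:other}, the torsion of $H_{2k-1}(BG^\delta;\Z)$ is not seen by the finite rotation subgroups. Write $\bar\chi^k\in\Hom(H_{2k}(BG^\delta;\Z),\Z)$ for the image of $\chi^k$ under the universal coefficient projection $H^{2k}(BG^\delta;\Z)\to\Hom(H_{2k}(BG^\delta;\Z),\Z)$, so that $\bar\chi^k(\sigma)=\chi^k(\sigma)$ for all $\sigma$. It suffices to prove that $\bar\chi^k$ is not divisible by any prime $p$ in $\Hom(H_{2k}(BG^\delta;\Z),\Z)$, because a homomorphism to $\Z$ with this property is automatically surjective; one then takes $\sigma_k$ to be any element with $\bar\chi^k(\sigma_k)=1$.

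First I would record the two inputs. (1) For every $n\geq 2$ the restriction of $\chi^k$ to the finite cyclic subgroup $\Z/n\Z\subset\mathrm{SO}(2)\subset G$ generates $H^{2k}(B\Z/n\Z;\Z)\cong\Z/n\Z$: the class $\chi$ is pulled back along $BG^\delta\to B\mathrm{Diff}_+S^1\simeq\C P^\infty$, so $\chi|_{B\Z/n\Z}$ is the pullback of the generator of $H^2(\C P^\infty;\Z)$ along the standard map $B\Z/n\Z\to\C P^\infty$ and hence generates $H^2(B\Z/n\Z;\Z)$; since $H^*(B\Z/n\Z;\Z)=\Z[u]/(nu)$ with $\deg u=2$, the $k$-th power of a generator of $H^2$ generates $H^{2k}$. (In the real analytic case, where one cannot use a smooth $\mathrm{SO}(2)$, this is a result of Nariman; see Remark \ref{rem:f}.) (2) By Theorem \ref{th:other}, the map $H_{2k-1}(\mathrm{SO}(2)_{\text{tor}};\Z)\cong\Q/\Z\to H_{2k-1}(BG^\delta;\Z)$ is trivial, so restricting along $\Z/n\Z\subset\mathrm{SO}(2)_{\text{tor}}$ the induced map $i_*\colon H_{2k-1}(B\Z/n\Z;\Z)\cong\Z/n\Z\to H_{2k-1}(BG^\delta;\Z)$ is trivial for every $n$. (For $k=1$ this is immediate from $H_1(BG^\delta;\Z)=0$.)

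Next I would fix a prime $p$ and analyze the restriction $r_p\colon H^{2k}(BG^\delta;\Z)\to H^{2k}(B\Z/p\Z;\Z)\cong\Z/p\Z$ by comparing universal coefficient sequences. By naturality, $r_p$ carries the subgroup $\mathrm{Ext}(H_{2k-1}(BG^\delta;\Z),\Z)\subset H^{2k}(BG^\delta;\Z)$ into $\mathrm{Ext}(H_{2k-1}(B\Z/p\Z;\Z),\Z)\cong\Z/p\Z$ via $\mathrm{Ext}(i_*,\Z)$, which is zero by input (2). Since $H_{2k}(B\Z/p\Z;\Z)=0$, the target $H^{2k}(B\Z/p\Z;\Z)$ coincides with its $\mathrm{Ext}$-term; hence $r_p$ annihilates $\mathrm{Ext}(H_{2k-1}(BG^\delta;\Z),\Z)$ and therefore factors as $r_p=\tilde r_p\circ(\text{the universal coefficient projection})$ for some homomorphism $\tilde r_p\colon\Hom(H_{2k}(BG^\delta;\Z),\Z)\to\Z/p\Z$. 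Now $\tilde r_p(\bar\chi^k)=r_p(\chi^k)$ is a generator of $\Z/p\Z$ by input (1), in particular nonzero; if we had $\bar\chi^k=p\psi$ with $\psi\in\Hom(H_{2k}(BG^\delta;\Z),\Z)$, then $\tilde r_p(\bar\chi^k)=p\,\tilde r_p(\psi)=0$, a contradiction. As $p$ was an arbitrary prime, $\bar\chi^k$ is divisible by no prime, hence surjective, and the argument is complete.

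Since everything needed is in place, there is no real obstacle left in carrying out this plan; the conceptual crux, however, is input (2). Without it, the reduction of $\chi^k$ modulo $p$ on $B\Z/p\Z$ could be accounted for entirely by the $\mathrm{Ext}$-contribution of torsion in $H_{2k-1}(BG^\delta;\Z)$, leaving open the possibility that $\bar\chi^k$ is divisible by $p$ even though $\chi^k$ is nonzero mod $p$; Theorem \ref{th:other}, which itself rests on Thurston's Theorem \ref{th:Thurston}, is precisely what rules this out. One may equally run the whole argument with $\mathrm{SO}(2)_{\text{tor}}=\Q/\Z$ in place of the groups $\Z/p\Z$, using $H^{2k}(B\mathrm{SO}(2)_{\text{tor}};\Z)\cong\widehat{\Z}$ together with the fact that $\chi^k$ restricts there to the unit $1\in\widehat{\Z}$.
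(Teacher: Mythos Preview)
Your argument is correct, but it follows a genuinely different route from the paper's (and Haefliger's) original reasoning. The paper deduces the statement directly from the $S^1$-equivariance of the Mather--Thurston map $H$: since the $S^1$-action on $\wedge B\overline{\Gamma}_1$ fixes the constant loops, one gets a map $BS^1=\mathbb{C}P^\infty\to \wedge B\overline{\Gamma}_1/\!/S^1$; composing with the homology inverse of $H/\!/S^1$ (Proposition~\ref{prop:Borel}) produces explicit classes $\sigma_k\in H_{2k}(BG^\delta;\Z)$ coming from $[\mathbb{C}P^k]$, on which $\chi^k$ manifestly evaluates to $1$. Your approach instead packages the Borel-construction step inside Theorem~\ref{th:other} and then runs a universal-coefficient argument to upgrade ``$\chi^k$ is nonzero mod every prime'' to ``$\bar\chi^k$ is surjective on integral homology.'' There is no circularity: the proof of Theorem~\ref{th:other} uses only Proposition~\ref{prop:Borel} and not the present statement. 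The trade-off is that the paper's route yields concrete cycles (images of $[\mathbb{C}P^k]$) and makes the geometric source of the splitting transparent, whereas your route is a clean existence argument that reuses Theorem~\ref{th:other} as a black box; it also makes explicit the algebraic point that Nariman's mod-$n$ nonvanishing alone would not suffice without knowing that the torsion of $H_{2k-1}(BG^\delta;\Z)$ receives nothing from $\Z/p\Z$.
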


Haefliger first pointed out that the mapping $H$ of Theorem \ref{th:Thurston}
is $S^1$-equivariant with respect to natural actions of $S^1$ on
both sides. Then the required claim follows from Theorem \ref{th:Thurston} by using the fact that the
$S^1$ action on free loop spaces has fixed points.

\vspace{2mm}
Consider the following part of the Gysin exact sequence associated with the central extension \eqref{eq:ce}
$$
\cdots\rightarrow H_{k+2}(BG^\delta;\Z)\overset{\cap\chi}{\longrightarrow}
H_{k}(BG^\delta;\Z) \overset{\mu}{\rightarrow} H_{k+1}(B\tilde{G}^\delta;\Z)
\overset{p_*}{\longrightarrow} H_{k+1}(BG^\delta;\Z)\rightarrow \cdots.
$$

\begin{prop}
Define a homomorphism
$$
\nu: H_k(\wedge B\overline{\Gamma}_1;\Z)
\rightarrow H_{k+1}(\wedge B\overline{\Gamma}_1;\Z)
$$
by the following map
$$
H_k(\wedge B\overline{\Gamma}_1;\Z)\ni \tau\mapsto
\nu(\tau)=\theta'_*(\tau\times [S^1])\in H_{k+1}(\wedge B\overline{\Gamma}_1;\Z)
$$
where $\theta': \wedge B\overline{\Gamma}_1\times S^1\rightarrow\wedge B\overline{\Gamma}_1$
denotes the natural $S^1$ action on $\wedge B\overline{\Gamma}_1$.
Then the following diagram is commutative:
$$
\begin{CD}
H_{k}(B\tilde{G}^\delta;\Z)
 @>{H_* }>{\cong}> H_k(\wedge B\overline{\Gamma}_1;\Z)
\\
@V{\mu\circ p_*}VV  @VV{\nu}V\\
H_{k+1}(B\tilde{G}^\delta;\Z) @>{H_*}>{\cong}>H_{k+1}(\wedge B\overline{\Gamma}_1;\Z).
\end{CD}
$$
\label{prop:nu}
\end{prop}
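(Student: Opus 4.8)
The plan is to give a geometric description of each of the two maps in the square and to recognise them as corresponding under Thurston's homology equivalence $H$ (Theorem \ref{th:Thurston}). For the left-hand composite: since the central extension \eqref{eq:ce} presents $p\colon B\tilde{G}^\delta\to BG^\delta$ (up to homotopy) as the principal $S^1$-bundle classified by the Euler class $\chi$, the homomorphism $\mu$ occurring in the Gysin sequence is the umkehr (``wrong-way'') map of this circle bundle. Concretely, a class $c\in H_k(BG^\delta;\Z)$ represented by a map $f\colon M\to BG^\delta$ from a closed oriented $k$-manifold is sent by $\mu$ to the class carried by the total space of the pulled-back circle bundle $f^{*}p\to M$ together with its tautological map to $B\tilde{G}^\delta$. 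Applying this to $c=p_{*}\tau$, where $\tau$ is represented by $g\colon M\to B\tilde{G}^\delta$, we get $f=p\circ g$ and $(p\circ g)^{*}p=g^{*}(p^{*}p)$; and $p^{*}p$, being the pullback of a principal bundle along its own projection, is canonically trivial, with total space $B\tilde{G}^\delta\times S^1$ and tautological map to $B\tilde{G}^\delta$ equal to the principal circle action $a\colon B\tilde{G}^\delta\times S^1\to B\tilde{G}^\delta$. Tracing through the identifications gives
$$
\mu\circ p_{*}(\tau)=a_{*}(\tau\times[S^1])\in H_{k+1}(B\tilde{G}^\delta;\Z),
$$
once $[S^1]$ is oriented compatibly with the fibre orientation used to define $\mu$.

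On the other side, $\nu$ is by definition $\tau\mapsto\theta'_{*}(\tau\times[S^1])$, so by naturality of the homology cross product the proposition reduces to the single assertion that $H$ carries the action $a$ to the loop-rotation action $\theta'$, i.e.\ that $H\circ a$ and $\theta'\circ(H\times\mathrm{id}_{S^1})$ agree on homology (equality up to homotopy suffices, and is what I expect to prove). This is exactly the $S^1$-equivariance of $H$ pointed out by Haefliger and recalled just after Theorem \ref{prop:chi1}: the loop $H(e)$ is $t\mapsto h(e,t)$, the holonomy around the fibre over $e$ of the codimension-one foliation on the universal flat $S^1$-product $B\tilde{G}^\delta\times S^1$, and moving $e$ by the principal action — which rotates the product structure (equivalently, the chosen trivialisation or section of the flat product) by $s$ — merely reparametrises this loop by $s$, so that $H(a(e,s))\simeq\theta'(H(e),s)$. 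Granting this, the chase
$$
\nu(H_{*}\tau)=\theta'_{*}\big((H\times\mathrm{id})_{*}(\tau\times[S^1])\big)=H_{*}\big(a_{*}(\tau\times[S^1])\big)=H_{*}\big(\mu\circ p_{*}(\tau)\big)
$$
proves commutativity of the square.

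The step I expect to be the main obstacle is the one just sketched: one has to be certain that the $S^1$-action on $B\tilde{G}^\delta$ for which $H$ is equivariant is precisely the principal circle action attached to \eqref{eq:ce}. This needs a little care, because the bar-construction model of $B\tilde{G}^\delta$ does not literally carry a free $S^1$-action — its only obvious self-maps of this kind, left translations by rotations, are homotopic to the identity — so the principal action lives only on a suitable geometric model. I would resolve this by realising $B\tilde{G}^\delta$ as the total space of the principal $S^1$-bundle over $BG^\delta$, for instance the homotopy fibre $B\overline{\mathrm{Diff}}_{+}S^1$ of $B\mathrm{Diff}^{\delta}_{+}S^1\to B\mathrm{Diff}_{+}S^1\simeq B\mathrm{SO}(2)$, on which both $a$ and the foliated flat $S^1$-product are defined on the nose, and then checking $H$-equivariance from the holonomy description of the classifying map; alternatively, since the statement only concerns induced maps on homology, one may carry out the whole comparison after applying $H_{*}$. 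The remaining ingredients — the umkehr description of $\mu$, the triviality of $p^{*}p$, and the orientation bookkeeping — are routine.
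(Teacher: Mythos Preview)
Your proposal is correct and follows essentially the same route as the paper: both identify $\mu\circ p_*$ with the map $\sigma\mapsto\theta_*(\sigma\times[S^1])$ coming from the principal $S^1$-action on $B\tilde{G}^\delta$ (viewed as the total space of the universal flat $S^1$-bundle over $BG^\delta$), and then invoke the $S^1$-equivariance of $H$ to conclude. The paper simply asserts these two ingredients, whereas you supply more justification for each---the umkehr description of $\mu$ and the triviality of $p^*p$ for the first, and an explicit discussion of the model issue for the second---so your version is, if anything, a more careful rendering of the same argument.
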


\begin{proof}
As already mentioned above, the mapping
$
H: B\tilde{G}^\delta\rightarrow \wedge B\overline{\Gamma}_1
$
is $S^1$-equivariant with respect to the natural $S^1$-actions.
The $S^1$-action 
$$
\theta: B\tilde{G}^\delta\times S^1\rightarrow B\tilde{G}^\delta
$$
on $B\tilde{G}^\delta$ is defined because $B\tilde{G}^\delta$ can be considered as
the total space of the universal $S^1$-bundle over $BG^\delta$, which is an $S^1$-
principal bundle. Thus we have the following homotopy commutative diagram
$$
\begin{CD}
B\tilde{G}^\delta\times S^1
@>{ H\times\text{id}}>> \wedge B\overline{\Gamma}_1\times S^1\\
@V{\theta}VV @VV{\theta'}V\\
B\tilde{G}^\delta @>{H}>> \wedge B\overline{\Gamma}_1
\end{CD}
$$

Now the homomorphism
$$
\mu\circ p_*: H_{k}(B\tilde{G}^\delta;\Z)\rightarrow H_{k+1}(B\tilde{G}^\delta;\Z)
$$
is realized by the map:
$$
H_{k}(B\tilde{G}^\delta;\Z)\ni \sigma\longmapsto 
\mu\circ p_*(\sigma)=
\theta_*(\sigma\times [S^1]) \in
H_{k+1}(B\tilde{G}^\delta;\Z).
$$
The claim follows from this.
\end{proof}

\vspace{2mm}
\noindent
{\it Proof of Theorem \ref{th:main}}\
As already mentioned, Mather 
proved that $B\overline{\Gamma}_1$ is $2$-connected.
If we apply Proposition \ref{prop:gh23} to the case $X=B\overline{\Gamma}_1$, then we obtain the second statements of
$\mathrm{(i)}$ and $\mathrm{(ii)}$.

Next we prove the first statement of $\mathrm{(i)}$.
Consider the following two parts of the Gysin exact sequence of the central extension \eqref{eq:ce}:
\begin{align}
&0=H_1(BG^\delta;\Z) \overset{\mu}{\rightarrow} H_2(B\tilde{G}^\delta;\Z)
\overset{p_\ast}{\rightarrow} H_2(BG^\delta;\Z)\overset{\cap\chi}{\longrightarrow} H_0(BG^\delta;\Z)=\Z
\rightarrow 0
\label{eq:1}\\
&
\rightarrow H_4(BG^\delta;\Z) \overset{\cap \chi}{\longrightarrow}
H_2(BG^\delta;\Z) \overset{\mu}{\rightarrow} H_3(B\tilde{G}^\delta;\Z)
\overset{p_\ast}{\rightarrow }H_3(BG^\delta;\Z)\rightarrow H_1(BG^\delta;\Z)=0
\label{eq:2}
\end{align}
where $\chi\in H^2(BG^\delta;\Z)$ denotes the Euler class and
$H_1(BG^\delta;\Z)=0$ is due to Hermann \cite{herman1}.
From the exact sequence \eqref{eq:1}, we have
$$
H_2(BG^\delta;\Z)\cong H_2(B\tilde{G}^\delta;\Z)\oplus \Z\quad (\text{non-canonical}).
$$
On the other hand, we show that the restriction $\mu\circ p_\ast$ of the homomorphism $\mu$ in the
exact sequence \eqref{eq:2} to the submodule $H_2(B\tilde{G}^\delta;\Z)\subset H_2(BG^\delta;\Z)$ is injective.
In view of Proposition \ref{prop:nu} together with the second statements of,
we can write
$$
\begin{CD}
H_{2}(B\tilde{G}^\delta;\Z)
 @>{H_*}>{\cong}> H_2(\wedge B\overline{\Gamma}_1;\Z)\cong H_2(\Omega B\overline{\Gamma}_1;\Z)
\\
@V{\mu\circ p_*}VV  @VV{\nu}V\\
H_{3}(B\tilde{G}^\delta;\Z) @>{H_*}>{\cong}>H_{3}(\wedge B\overline{\Gamma}_1;\Z)
\cong H_3(B\overline{\Gamma}_1;\Z)\oplus
H_3(\Omega B\overline{\Gamma}_1;\Z).
\end{CD}
$$
For any element $\sigma\in H_2(B\tilde{G}^\delta;\Z)$, we consider the element
$$
\tau=H_*(\sigma)\in H_2(\wedge B\overline{\Gamma}_1;\Z)\cong H_2(\Omega B\overline{\Gamma}_1;\Z).
$$

Let $\tau^*\in H_3(B\overline{\Gamma}_1;\Z)\cong \pi_3 (B\overline{\Gamma}_1)$
be the element which corresponds to $\tau\in H_2(\Omega B\overline{\Gamma}_1;\Z)$
under the natural isomorphism $H_3(B\overline{\Gamma}_1;\Z)\cong H_2(\Omega B\overline{\Gamma}_1;\Z)$ and choose a continuous mapping 
$$
f: S^3\rightarrow B\overline{\Gamma}_1
$$
which represents $\tau^*$. Then,
if we denote by $\iota: S^2\rightarrow \Omega S^3$
the mapping representing the generator of $\pi_2 (\Omega S^3)\cong \Z$,
the above element $\tau=H_*(\sigma)$
is represented by the following composed mapping
$$
S^2\overset{\iota}{\longrightarrow}\Omega S^3
\overset{\Omega f}{\longrightarrow} \Omega B\overline{\Gamma}_1
\subset \wedge B\overline{\Gamma}_1.
$$
By Proposition \ref{prop:nu} again,
$$
H_*(\bar{\mu}(\sigma))=\nu(H_*(\sigma))=\nu(\tau)=\theta'_*(\tau\times [S^1]).
$$
On the other hand, the following diagram is clearly commutative
$$
\begin{CD}
\wedge S^3\times S^1
@>{\wedge f\times\text{id} }>> \wedge B\overline{\Gamma}_1\times S^1\\
@V{\theta^{\prime\prime}}VV @VV{\theta'}V\\
\wedge S^3 @>{\wedge f}>> \wedge B\overline{\Gamma}_1
\end{CD}
$$
where $\theta^{\prime\prime}$ denotes the $S^1$ action on $\wedge S^3$.
Hence
$$
\theta'_*(\tau\times [S^1])=(\wedge f)_*(\theta^{\prime\prime}_*( j_*\iota_*([S^2])\times [S^1]))
$$
where $j: \Omega S^3\rightarrow \wedge S^3$ denotes the inclusion.
In the terminology of Proposition \ref{prop:wk}, we have
$$
j_*\iota_*([S^2])=\alpha.
$$
On the other hand, it is easy to see that
$$
\theta^{\prime\prime}_*(\alpha\times [S^1])=\beta.
$$
Hence
$$
\nu(\tau)=\theta'_*(\tau\times [S^1])=(\wedge f)_*(\beta)
$$
and finally
$$
(\wedge f)_*(\beta)=f_*([S^3])=\tau^*\in H_3(B\overline{\Gamma}_1;\Z)\subset 
H_3(\wedge B\overline{\Gamma}_1;\Z).
$$
Summing up, the homomorphism 
$\bar{\mu}:H_{2}(B\tilde{G}^\delta;\Z)\rightarrow H_{3}(B\tilde{G}^\delta;\Z)$
is described, under the isomorphism $H_*$, as
$$
H_2(\wedge B\overline{\Gamma}_1;\Z)
\cong H_2(\Omega B\overline{\Gamma}_1;\Z)
\ni
\tau\overset{\nu}{\mapsto} (\tau^*,0)
\in H_3(B\overline{\Gamma}_1;\Z)\oplus
H_3(\Omega B\overline{\Gamma}_1;\Z).
$$
Observe here that $\nu$ does not hit the second component because
$H_3(\Omega S^3;\Z)=0$. 

Thus we have proved that $\bar{\mu}$ is injective as required.
Now we use Theorem \ref{prop:chi1} to conclude that the composition
$$
H_4(BG^\delta;\Z)\overset{\cap\chi}{\longrightarrow}
H_2(BG^\delta;\Z))\overset{\cap\chi}{\longrightarrow} H_0(BG^\delta;\Z)\cong\Z
$$
is surjective. Now consider the following subgroup
$$
\mathrm{Im}\left(H_4(BG^\delta;\Z)\overset{\cap\chi}{\longrightarrow}
H_2(BG^\delta;\Z)\right)=
\mathrm{Ker}\left(H_2(BG^\delta;\Z)\overset{\mu}{\longrightarrow}
H_3(B\tilde{G}^\delta;\Z)\right)
$$
of $H_2(BG^\delta;\Z)$. Then it is easy to see that this subgroup is isomorphic
to
$\Z$ and we have the required canonical direct sum decomposition
\begin{equation}
H_2(BG^\delta;\Z)=H_2(B\tilde{G}^\delta;\Z)\oplus\Z
\cong H_3(B\overline{\Gamma}_1;\Z)\oplus\Z\quad (\text{canonical direct sum}).
\label{eq:cds}
\end{equation}
This finishes the proof of $\mathrm{(i)}$ and also $\mathrm{(iii)}$.
Finally the first statement of $\mathrm{(ii)}$ follows from 
the exact sequence \eqref{eq:2} and the second statement of $\mathrm{(ii)}$.
Thus we have proved Theorem \ref{th:main}.
\qed

\vspace{2mm}
\noindent
{\it Proof of Theorem \ref{th:phi}}\quad
By Theorem \ref{th:main}, the second factor of $\mu(\sigma)$ in
$$
H_3(B\tilde{G}^\delta;\Z)\cong H_3(B\overline{\Gamma}_1;\Z)\oplus
H_3(\Omega B\overline{\Gamma}_1;\Z)
$$
is trivial and the first factor is detected by the 
following composed mapping
$$
B\tilde{G}^\delta \rightarrow \wedge B\overline{\Gamma}_1
\rightarrow B\overline{\Gamma}_1
$$
where the second one denotes the evaluation map at the base point of
$S^1$ which also serves as the natural retraction  onto the
subspace of $\wedge B\overline{\Gamma}_1$ consisting of constant loops.
This composed mapping factors through the classifying space of
$\mathrm{Diff}^\delta_+\R$ as
$$
B\tilde{G}^\delta\rightarrow B\mathrm{Diff}^\delta_+\R\rightarrow 
B\overline{\Gamma}_1.
$$
The endomorphism $\varphi_k$ of $\tilde{G}^\delta$ is defined on this
larger group $\mathrm{Diff}^\delta_+\R$ as an inner automorphism.
Since any inner automorphism of a group acts on the homology trivially,
the required result follows.
\qed

\vspace{2mm}
Here we recall a few facts from \cite{MR759480}.
Let
$
p_k: G^{(k)}\rightarrow G
$
be the $k$-fold cover of $G$. Then it can be described as
$$
G^{(k)}=\{f\in G; fR(1/k)=R(1/k)f \} 
$$
where $R(1/k)$ denotes the rotation by $1/k$, so that
we have also the inclusion
$$
i_k: G^{(k)}\subset G.
$$
\begin{definition}
Define an endomorphism
$$
\varphi^\Q_k: H_*(BG^\delta;\Q)\rightarrow H_*(BG^\delta;\Q)
$$
by setting
$$
\varphi^\Q_k(\sigma)= (i_k)_*(p_k)_*^{-1} (\sigma)\quad
(\sigma\in H_m(BG^\delta;\Q)).
$$
\label{def:phi}
\end{definition}

\begin{remark}
The above definition is given by adapting the endomorphism $\varphi_k$ on $\tilde{G}$
to the case of $G$ but only at the rational  homological level.
In fact, it can be seen that the following diagram is commutative.
$$
\begin{CD}
 H_m(B\tilde{G}^\delta;\Q) @>{(\varphi_k)_*}>> H_m(B\tilde{G}^\delta;\Q)\\
 @V{p_*}VV  @VV{p_*}V\\
  H_{m}(BG^\delta;\Q) @>{\varphi^\Q_k}>> H_{m}(BG^\delta;\Q).
\end{CD}
$$
\label{re:phi}
\end{remark}

\begin{prop}[\cite{MR759480}]
For any $\sigma\in H_m(BG^\delta;\Q)$, we have the identity
$$
(\varphi_k)_*(\mu(\sigma))=\mu\Bigl(\frac{1}{k}\varphi^\Q_k(\sigma)\Bigr).
$$
Namely, the following diagram is commutative
$$
\begin{CD}
 H_m(BG^\delta;\Q) @>{\frac{1}{k}\varphi^\Q_k}>> H_m(BG^\delta;\Q)\\
 @V{\mu}VV  @VV{\mu}V\\
  H_{m+1}(B\tilde{G}^\delta;\Q) @>{\varphi_k}>> H_{m+1}(B\tilde{G}^\delta;\Q).
\end{CD}
$$
This also holds if we replace $G^\delta,\tilde{G}^\delta$  by 
$G^{\omega,\delta},\tilde{G}^{\omega,\delta}$, respectively.
\label{prop:phi}
\end{prop}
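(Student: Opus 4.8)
The plan is to realize $\mu$ as the Gysin (wrong-way) homomorphism of the circle bundle $\pi\colon B\tilde G^\delta\to BG^\delta$ attached to the central extension \eqref{eq:ce}, whose fibre is $B\Z\simeq S^1$ and whose Euler class is $\chi$, and then to see how $\varphi_k$ moves this bundle around. The structural fact to start from is that $\varphi_k$ is an isomorphism of $\tilde G$ onto the subgroup $\tilde G_k=\{g\in\tilde G\;;\;gT_{1/k}=T_{1/k}g\}$, where $T_{1/k}$ denotes translation by $\tfrac1k$, and that $\tilde G_k=p^{-1}(G^{(k)})$. This has two consequences. First, $B\tilde G_k^\delta$ is the pullback of $\pi$ along $Bi_k\colon B(G^{(k)})^\delta\to BG^\delta$, so it carries a circle bundle $\pi_a\colon B\tilde G_k^\delta\to B(G^{(k)})^\delta$ (the one attached to $0\to\langle T_1\rangle\to\tilde G_k\to G^{(k)}\to1$) and a bundle map $B\iota\colon B\tilde G_k^\delta\to B\tilde G^\delta$ over $Bi_k$, where $\iota$ is the inclusion. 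Second, $\varphi_k$ identifies $\tilde G_k/\langle T_{1/k}\rangle$ with $G$, which gives $B\tilde G_k^\delta$ a second circle bundle $\pi_b\colon B\tilde G_k^\delta\to BG^\delta$ (attached to $0\to\langle T_{1/k}\rangle\to\tilde G_k\to G\to1$), isomorphic to $\pi$ via $B\varphi_k$. Because $\langle T_1\rangle\subset\langle T_{1/k}\rangle$ has index $k$ and the induced quotient $\tilde G_k/\langle T_1\rangle\to\tilde G_k/\langle T_{1/k}\rangle$ is precisely the covering $p_k$, one has $\pi_b=Bp_k\circ\pi_a$.

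Granting this, the proof assembles quickly. Factor $\varphi_k=\iota\circ\varphi_k'$ with $\varphi_k'\colon\tilde G\xrightarrow{\ \cong\ }\tilde G_k$. Naturality of the Gysin map under the bundle isomorphism $B\varphi_k'\colon(B\tilde G^\delta,\pi)\to(B\tilde G_k^\delta,\pi_b)$ over the identity gives $(\varphi_k)_*\mu(\sigma)=(B\iota)_*(B\varphi_k')_*\mu(\sigma)=(B\iota)_*\mu_b(\sigma)$, where $\mu_b$ is the Gysin map of $\pi_b$. Naturality of the Gysin map for the pullback square relating $\pi_a$ and $\pi$ over $Bi_k$ gives $\mu\bigl((i_k)_*\tau\bigr)=(B\iota)_*\mu_a(\tau)$ for $\tau\in H_*(B(G^{(k)})^\delta;\Q)$, $\mu_a$ being the Gysin map of $\pi_a$; applying this to $\tau=(p_k)_*^{-1}\sigma$ yields $\mu\bigl(\tfrac1k\varphi^\Q_k(\sigma)\bigr)=\tfrac1k(B\iota)_*\mu_a\bigl((p_k)_*^{-1}\sigma\bigr)$ (here $(p_k)_*$ is a $\Q$-homology isomorphism because $p_k$ is a central extension by the finite group $\Z/k\Z$). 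So Proposition~\ref{prop:phi} is reduced to the identity $\mu_b(\sigma)=\tfrac1k\,\mu_a\bigl((p_k)_*^{-1}\sigma\bigr)$ in $H_{m+1}(B\tilde G_k^\delta;\Q)$. This reduction goes through word for word with $G^{\omega,\delta},\tilde G^{\omega,\delta}$ in place of $G^\delta,\tilde G^\delta$, since $\varphi_k$, $\tilde G_k$, $G^{(k)}$, $p_k$ and both bundle structures are available, and analytic, in the real analytic category.

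The crux — and the only place the factor $\tfrac1k$ enters — is that last identity, comparing the two Gysin maps on the single space $B\tilde G_k^\delta$. I would verify it on rational cohomology, using that the homology Gysin map of a circle bundle is the transpose of fibrewise integration under the Kronecker pairing: it suffices to show $(Bp_k)^*\circ(\pi_b)_!=\tfrac1k\,(\pi_a)_!$ on $H^{m+1}(B\tilde G_k^\delta;\Q)$. Here the point is that the $S^1$-fibre of $\pi_b$ is $B\langle T_{1/k}\rangle$ while that of $\pi_a$ is $B\langle T_1\rangle$, and the inclusion $\langle T_1\rangle\hookrightarrow\langle T_{1/k}\rangle$, having index $k$, induces on classifying spaces the degree-$k$ self-map of $S^1$; hence a generator of $H^1$ of the $\pi_b$-fibre pulls back to $k$ times a generator of $H^1$ of the $\pi_a$-fibre, so integrating over the $\pi_a$-fibre equals $k$ times $(Bp_k)^*$ of integrating over the $\pi_b$-fibre. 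Equivalently this can be encoded by Euler classes: $(Bp_k)^*\chi=k\,e_a$ in $H^2(B(G^{(k)})^\delta;\Z)$, where $e_a=(Bi_k)^*\chi$ is the Euler class of $\pi_a$, an equality one reads off from a morphism of central extensions of $G^{(k)}$ — the extension pulled back from $0\to\langle T_{1/k}\rangle\to\tilde G_k\to G\to1$ along $p_k$ receives the extension $0\to\langle T_1\rangle\to\tilde G_k\to G^{(k)}\to1$ by a map which is $\times k$ on the central $\Z$'s.

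I expect the real work to be the bookkeeping in this last step: keeping the two nested central subgroups $\langle T_1\rangle\subset\langle T_{1/k}\rangle$ of $\tilde G_k$ straight, together with the resulting degree-$k$ discrepancy between the two circle fibrations of $B\tilde G_k^\delta$, and the minor point-set nuisance that $Bp_k$ has fibre $B(\Z/k\Z)$ rather than a finite set — which is exactly why the comparison is cleaner on rational cohomology via fibrewise integration than with preimages of cycles. None of the $C^\infty$-specific input used elsewhere in the paper (Mather's $2$-connectedness of $B\overline\Gamma_1$, Thurston's Theorem~\ref{th:Thurston}) enters here, so the real analytic assertion of the proposition follows from the identical argument.
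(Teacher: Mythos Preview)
The paper does not prove this proposition: it is quoted from \cite{MR759480} without argument, so there is nothing in the present paper to compare your proof against. That said, your proof is correct and self-contained.

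Your reduction is sound. The identification of $\varphi_k(\tilde G)$ with $\tilde G_k=\{g\in\tilde G:gT_{1/k}=T_{1/k}g\}=p^{-1}(G^{(k)})$ is correct (translation number is conjugacy-invariant in $\tilde G$, so every lift of an element of $G^{(k)}$ already commutes with $T_{1/k}$). The two nested central subgroups $\langle T_1\rangle\subset\langle T_{1/k}\rangle$ of $\tilde G_k$ give the two circle fibrations $\pi_a,\pi_b$ on $B\tilde G_k^\delta$, and $\pi_b=Bp_k\circ\pi_a$ because $p_k\circ(p|_{\tilde G_k})=p\circ\varphi_k^{-1}$ as maps $\tilde G_k\to G$. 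Naturality of the Gysin map under the bundle isomorphism $B\varphi_k'$ (over $\mathrm{id}_G$) and under the pullback square along $Bi_k$ then reduces the proposition to the single identity $\mu_b(\sigma)=\tfrac1k\,\mu_a\bigl((p_k)_*^{-1}\sigma\bigr)$ in $H_{m+1}(B\tilde G_k^\delta;\Q)$.

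Your verification of this last identity is also correct. The fibre $F_a=B\langle T_1\rangle$ of $\pi_a$ sits inside the fibre $F_b=B\langle T_{1/k}\rangle$ of $\pi_b$ by a map inducing multiplication by $k$ on $H_1$, so the restriction $H^1(F_b)\to H^1(F_a)$ sends the generator to $k$ times the generator; comparing the Serre spectral sequences of $\pi_a$ and $\pi_b$ (both abutting to $H^*(B\tilde G_k^\delta;\Q)$, with $(Bp_k)^*$ an isomorphism on the base coefficients) gives $(\pi_a)_! = k\,(Bp_k)^*(\pi_b)_!$, which dualises under the Kronecker pairing to the desired relation on homology Gysin maps. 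The Euler-class formulation $(Bp_k)^*\chi=k\,(Bi_k)^*\chi$ you sketch is equivalent and follows exactly from the morphism of central extensions of $G^{(k)}$ you describe. Since none of this uses Thurston's theorem or the $2$-connectedness of $B\overline{\Gamma}_1$, the real analytic statement follows by the identical argument, as you note.
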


\begin{prop}[Action of $\varphi_k^\Q$]
In the direct sum decomposition
$$
H_2(BG^\delta;\Q)=H_2(B\tilde{G}^\delta;\Q)\oplus \Q\quad (\text{canonical direct sum})
$$
given in the proof of Theorem \ref{th:main} (see \eqref{eq:cds}), both summands
$H_2(B\tilde{G}^\delta;\Q)$ and $\Q$ are eigenspaces of $\varphi_k^\Q$ with eigenvalues 
$k$ and $\frac{1}{k}$, respectively.
\label{prop:phi_k^Q}
\end{prop}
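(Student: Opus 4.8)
The plan is to combine three ingredients: the action of $\varphi_k^\Q$ on the Euler class, the ``reduction modulo $\ker\mu$'' furnished by Theorem~\ref{th:phi} together with Proposition~\ref{prop:phi}, and the characterisation of the two summands of \eqref{eq:cds} in terms of $\chi$ and $\mu$.

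First I would determine how $\varphi_k^\Q$ interacts with $\chi$. Since $\varphi_k^\Q(\sigma)=(i_k)_*(p_k)_*^{-1}(\sigma)$, for $\sigma\in H_2(BG^\delta;\Q)$ one has $\chi(\varphi_k^\Q(\sigma))=\langle i_k^*\chi,(p_k)_*^{-1}(\sigma)\rangle$, so everything reduces to comparing $i_k^*\chi$ with $p_k^*\chi$ in $H^2(B(G^{(k)})^\delta;\Z)$. Over $B(G^{(k)})^\delta$ there are two tautological flat $S^1$-bundles: the one classified by $i_k$, with fibre the circle on which $G^{(k)}\subset G$ acts, and the one classified by $p_k$, with fibre the quotient of that circle by the central rotation subgroup $\Z/k=\langle R(1/k)\rangle$. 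The natural bundle map from the former to the latter restricts on each fibre to the $k$-fold covering $S^1\to S^1$; for an oriented $S^1$-bundle this fibrewise $k$-fold covering operation replaces the associated complex line bundle $L$ by $L^{\otimes k}$, so that passing from the cover to its quotient multiplies the Euler class by $k$. Hence $p_k^*\chi=k\,i_k^*\chi$ (the simplest instance being $S^3\to L(k;1)$ over $S^2$, with Euler numbers $-1$ and $-k$), and therefore
$$
\chi\bigl(\varphi_k^\Q(\sigma)\bigr)=\tfrac1k\,\langle p_k^*\chi,(p_k)_*^{-1}(\sigma)\rangle=\tfrac1k\,\langle\chi,\sigma\rangle=\tfrac1k\,\chi(\sigma)\qquad(\sigma\in H_2(BG^\delta;\Q)).
$$

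Next I would insert the homological identities. Theorem~\ref{th:phi} gives $(\varphi_k)_*(\mu(\sigma))=\mu(\sigma)$ for $\sigma\in H_2(BG^\delta;\Z)$, hence also rationally, whereas Proposition~\ref{prop:phi} gives $(\varphi_k)_*(\mu(\sigma))=\mu\bigl(\tfrac1k\varphi_k^\Q(\sigma)\bigr)$. Subtracting, $\mu\bigl(\tfrac1k\varphi_k^\Q(\sigma)-\sigma\bigr)=0$, so $\varphi_k^\Q(\sigma)-k\sigma\in\ker\mu$ for every $\sigma\in H_2(BG^\delta;\Q)$. By the proof of Theorem~\ref{th:main} (and since $-\otimes\Q$ is exact), the kernel of $\mu$ on $H_2(BG^\delta;\Q)$ is precisely the line $\Q e$ spanned by the canonical generator $e$ of the $\Z$-summand in \eqref{eq:cds}, characterised by $\mu(e)=0$ and $\chi(e)=1$; and by the exact sequence \eqref{eq:1} the complementary summand $H_2(B\tilde G^\delta;\Q)$ equals $\ker\bigl(\chi\colon H_2(BG^\delta;\Q)\to\Q\bigr)$. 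Consequently $\varphi_k^\Q(\sigma)=k\sigma+t(\sigma)\,e$ for some linear functional $t$ on $H_2(BG^\delta;\Q)$.

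Finally I would evaluate $t$ on the two summands by applying $\chi$ and invoking the first paragraph. Taking $\sigma=e$ gives $\varphi_k^\Q(e)=(k+t(e))e$, and applying $\chi$ yields $k+t(e)=\chi(\varphi_k^\Q(e))=\tfrac1k\chi(e)=\tfrac1k$; hence $\varphi_k^\Q(e)=\tfrac1k e$, so $\Q$ is the $\tfrac1k$-eigenspace. Taking $\sigma\in\ker\chi=H_2(B\tilde G^\delta;\Q)$ and applying $\chi$ to $\varphi_k^\Q(\sigma)=k\sigma+t(\sigma)e$ gives $t(\sigma)=\chi(\varphi_k^\Q(\sigma))-k\chi(\sigma)=\tfrac1k\chi(\sigma)-0=0$; hence $\varphi_k^\Q(\sigma)=k\sigma$, so $H_2(B\tilde G^\delta;\Q)$ is the $k$-eigenspace. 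The one genuinely non-formal step is the Euler-class identity $p_k^*\chi=k\,i_k^*\chi$, which is in any case among the facts recalled from \cite{MR759480}; here one should take care with the direction of the fibrewise covering and with orientations, although any residual sign ambiguity (in $\chi(e)$ or in this identity) cancels in the last computation. Everything else is bookkeeping with the Gysin sequence and the quoted results.
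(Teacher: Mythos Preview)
Your argument is correct. The three ingredients --- the identity $p_k^*\chi=k\,i_k^*\chi$ (hence $\chi\circ\varphi_k^\Q=\tfrac1k\chi$), the combination of Theorem~\ref{th:phi} with Proposition~\ref{prop:phi} giving $\varphi_k^\Q(\sigma)-k\sigma\in\ker\mu$, and the identification $\ker\mu=\Q e$, $\ker\chi=p_*H_2(B\tilde G^\delta;\Q)$ from the proof of Theorem~\ref{th:main} --- fit together exactly as you describe, and the final linear-algebra step is clean.

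As for comparison: the paper does not supply a proof of this proposition; it is stated alongside Proposition~\ref{prop:phi} as a fact imported from \cite{MR759480}. Your derivation is therefore not a repetition of the paper's argument but a self-contained one assembled from the other results proved here. In \cite{MR759480} the eigenvalue statement is obtained more directly from the behaviour of $\varphi_k$ on the Gel'fand--Fuchs classes $\alpha,\chi$ (one checks $\varphi_k^\Q$ multiplies $\alpha$ by $k$ and $\chi$ by $\tfrac1k$ on cohomology, then dualises), without passing through $\mu$ and Theorem~\ref{th:phi}. Your route has the virtue of staying entirely within the homological framework of this paper and of making explicit why the canonical splitting \eqref{eq:cds} (rather than an arbitrary complement to $\ker\chi$) is the one diagonalising $\varphi_k^\Q$; the price is the dependence on Theorem~\ref{th:phi}, whose proof in turn relies on Thurston's theorem. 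The Euler-class identity $p_k^*\chi=k\,i_k^*\chi$ is indeed the only non-formal input, and your geometric justification via the fibrewise $k$-fold cover is the standard one.
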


\vspace{3mm}
\noindent
{\it Proof of Proposition \ref{prop:chi}}

By the assumption,
$$
(\varphi_k)_* (\mu(\sigma))=\mu(\sigma)\in 
H_3(B\widetilde{\mathrm{Diff}}_+^{\omega,\delta} S^1;\Z).
$$
for some
$\sigma\in H_2(B\mathrm{Diff}_+^{\omega,\delta} S^1;\Z)$ with $\chi(\sigma)\not=0$ and 
$k$.
Then by Proposition \ref{prop:phi}, we have
$$
\mu\Bigl(\sigma-\frac{1}{k}\varphi_k^\Q(\sigma)\Bigr)=0.
$$
Hence, by the exact sequence $\eqref{eq:2}$ 
(for the real analytic diffeomorphism group with the rational coefficients), we have
$$
\Bigl(\sigma-\frac{1}{k}\varphi_k^\Q(\sigma)\Bigr)\in
\mathrm{Im}\,\Bigl(H_4(B\mathrm{Diff}_+^{\omega,\delta} S^1;\Q) 
\overset{\cap \chi}{\longrightarrow}
H_2(B\mathrm{Diff}_+^{\omega,\delta} S^1;\Q)\Bigr).
$$
On the other hand, we have
$$
\chi\Bigl(\sigma-\frac{1}{k}\varphi_k^\Q(\sigma)\Bigr)=
\Bigl(1-\frac{1}{k^2}\Bigr)\chi(\sigma)\not=0.
$$
Therefore $\chi^2\not=0\in H^4(B\mathrm{Diff}_+^{\omega,\delta} S^1;\Q)$ completing the proof.
\qed

\vspace{3mm}
\noindent
{\it Proof of Theorem \ref{th:Q}}

The short exact sequence \eqref{eq:1} holds also for the real analytic case:
$$
0=H_1(BG^{\omega,\delta};\Z) \overset{\mu}{\rightarrow} H_2(B\tilde{G}^{\omega,\delta};\Z)
\rightarrow H_2(BG^{\omega,\delta};\Z)\overset{\cap\chi}{\longrightarrow} H_0(BG^{\omega,\delta};\Z)=\Z
\rightarrow 0
$$
where $G^{\omega,\delta}$ denotes $\mathrm{Diff}_+^{\omega,\delta} S^1$ and $H_1(BG^{\omega,\delta};\Z)=0$
because of Herman's result \cite{herman1} that $\mathrm{Diff}_+^{\omega,\delta} S^1$ is a simple group.
Therefore 
$$
H_2(BG^{\omega,\delta};\Z)\cong
H_2(B\tilde{G}^{\omega,\delta};\Z)\oplus \Z\quad (\text{non-canonical direct sum}).
$$
By the assumption $\chi^2=0$ and the exact sequence \eqref{eq:2} 
$$
\rightarrow H_4(BG^{\omega,\delta};\Z) \overset{\cap \chi}{\longrightarrow}
H_2(BG^{\omega,\delta};\Z) \overset{\mu}{\rightarrow} H_3(B\tilde{G}^{\omega,\delta};\Z)
\rightarrow H_3(BG^{\omega,\delta};\Z)\rightarrow H_1(BG^{\omega,\delta};\Z)=0
$$
for the real analytic case, we can conclude that the homomorphism $\mu$ is {\it injective} on 
the (non-canonical) summand $\Z$. Furthermore
$$
\mu(H_2(B\tilde{G}^{\omega,\delta};\Z))\cap \mu(\Z)=\{0\}
$$
because otherwise, there is a non-zero integer $n$ and
$y\in H_2(B\tilde{G}^{\omega,\delta};\Z))$ such that 
$\mu(y)=\mu(n)$. It follows that $\mu(y-n)=0$ which implies $y-n\in\Im\,\cap \chi$.
But since $\chi(y-n)=-n\not=0$, this contradicts the assumption $\chi^2=0$.
Hence
$$
\mathrm{Im}\,\mu =\mu(H_2(B\tilde{G}^{\omega,\delta};\Z))\oplus \mu(\Z)\ \subset 
H_3(B\tilde{G}^{\omega,\delta};\Z)
$$
where $\mu(\Z)\cong\Z$.
Let us define
\begin{align*}
\tilde{P}=&\text{submodule of $H_3(B\tilde{G}^{\omega,\delta};\Z)$ generated by
$\mu(H_2(B\tilde{G}^{\omega,\delta};\Z))$ and}\\
&
\text{the elements $\{\mu(1),(\varphi_k)_*(\mu(1));k=2,3,\cdots\}$}
\end{align*}
and set
$$
P=\tilde{P}/\mu(H_2(B\tilde{G}^{\omega,\delta};\Z)).
$$
We construct a surjective homomorphism
\begin{equation}
T_P: P\twoheadrightarrow\Q.
\label{eq:hat}
\end{equation}
By the definition of the group $P$, there is a homomorphism
$$
H_2(BG^{\omega,\delta};\Z)/H_2(B\tilde{G}^{\omega,\delta};\Z)\cong \Z\rightarrow P
$$
and the left hand side is detected by the homomorphism 
$\chi:H_2(BG^{\omega,\delta};\Z)\rightarrow Z$.
Consider the rational form of this homomorphism
\begin{equation}
H_2(BG^{\omega,\delta};\Q)/H_2(B\tilde{G}^{\omega,\delta};\Q)\cong \Q\rightarrow P\otimes_\Z \Q.
\label{eq:hatt}
\end{equation}
By Proposition \ref{prop:phi}, together with Definition \ref{def:phi} and Remark \ref{re:phi},
the homomorphism $\varphi_k^\Q$ acts on the left hand side with eigenvalue $\frac{1}{k}$, and
this homomorphism is transferred to the homomorphism $k(\varphi_k)_*$ on the right hand side.
Here observe that this operation preserves the space $\mu(H_2(B\tilde{G}^{\omega,\delta};\Q))$
so that it induces that on the quotient $P\otimes_\Z\Q$.
Now we show that the above homomorphims \eqref{eq:hatt} is an isomorphism.
Injectivity is clear because the generator of the summand $\Z\ (\text{non-canonical})\subset 
H_2(G^{\omega,\delta};\Q)$ goes to $\mu(1)\otimes 1$. To prove the surjectivity, it is 
enough to show that the element $(\varphi_k)_*(\mu(1))\otimes 1$ is contained in the image
for any $k$.
But by Proposition \ref{prop:phi} and \ref{prop:phi_k^Q}, 
$$
(\varphi_k)_*(\mu(1))\otimes 1 = \mu \left(\frac{1}{k}\varphi_k^\Q (1)\right)\otimes 1
=\mu(1)\otimes \frac{1}{k^2}.
$$
Now we define the homomorphism 
$T_P$ to be the composition
$$
T_P: P\rightarrow P\otimes_\Z\Q\ \overset{\eqref{eq:hatt}}{\cong}\ \Q.
$$
It remains to prove that this homomorphism is surjective
(this is the main point of the proof). For any rational number
$\frac{p}{q}\in\Q$, consider the element $pq\ (\varphi_q)_*(\mu(1))\in P$. Then we have
$$
T_P\left(pq\ (\varphi_q)_*(\mu(1))\right) 
=
T_P\left( pq\ \mu \left(\frac{1}{q}\varphi_q^\Q (1)\right) \right)
=
T_P\left( pq\ \frac{1}{q^2}\mu (1) \right)=\frac{p}{q}
$$
proving the surjectivity.
Finally the last claim $P/\Z\subset H_3(BG^{\omega,\delta};\Z)$ follows from the 
exact sequence \eqref{eq:2}.
\qed

\vspace{2mm}
\begin{remark}
The homomorphism $T_P$ constructed above
can be interpreted as a particular case of the secondary characteristic class defined
in Definition \ref{def:sc} below. In this case, it is equal to the class
$T{\chi^2}$ defined
in $H^3(B\tilde{G}^{\omega,\delta};\Q)$ associated with the assumption that 
$\chi^2=0\in H^4(BG^{\omega,\delta};\Q)$. 

Apart from the above, one distinctive feature here is that
the value of this homomorphism takes any rational number on
{\it integral} homology classes in $H_3(B\tilde{G}^{\omega,\delta};\Z)$.
\end{remark}

\vspace{2mm}
\noindent
{\it Proof of Theorem \ref{th:other}}.  

Since $\displaystyle 
\mathrm{SO}(2)_{\text{tor}}=\lim_{\underset{n}{\rightarrow}}\Z/n\Z$
it is enough to show that the homomorphism 
$$
H_{2k-1}(\Z/n\Z;\Z)\cong \Z/n\Z\rightarrow H_{2k-1}(BG^\delta;\Z)
$$
is trivial for any $n,\, k\in\mathbb{N}$ . 
This homomorphism is induced by a mapping $i: L^{2k-1}_n\rightarrow BG^\delta$ 
from the $(2k-1)$-dimensional lens space $L^{2k-1}_n=S^{2k-1}/(\Z/n\Z)$ to $BG^\delta$ defined as the 
composition $L^{2k-1}_n \rightarrow B(\Z/n\Z) \rightarrow BG^\delta$.
Let 
$$
S^1\rightarrow L_n^{2k-1}\tilde{\times} S^1\rightarrow L_n^{2k-1}
$$
be the foliated $S^1$-bundle
over $L_n^{2k-1}$ corresponding to the mapping $i$, where
$L_n^{2k-1}\tilde{\times} S^1$ denotes its total space.
Since, as already mentioned, $B\tilde{G}^\delta$ can be considered as the total
space of the universal flat $S^1$-bundle over $BG^\delta$, there exists a map
$$
\tilde{i}: L_n^{2k-1}\tilde{\times} S^1\rightarrow B\tilde{G}^\delta
$$
making the following diagram
commutative
$$
\begin{CD}
 L_n^{2k-1}\tilde{\times} S^1 @>{\tilde{i}}>> B\tilde{G}^\delta\\
 @VVV  @VVV \\ 
 L_n^{2k-1} @>{i}>> BG^\delta.
\end{CD}
$$
The foliation on 
$$
L_n^{2k-1}\tilde{\times} S^1= S^{2k-1}\times_{\Z/n} S^1
$$
can also be described as 
the quotient of the horizontal foliation 
on $S^{2k-1}\times S^1$ by the action 
of $\Z/n\Z$, where the generator of $\Z/n\Z$ acts on $S^1$ by $1/n$ rotation.  
Hence, its leaf space is considered to be 
a circle which is denoted by $S^1/n=S^{1}/(\Z/n\Z)$ and 
the mapping 
$$
f: L_n^{2k-1}\tilde{\times} S^1\rightarrow S^1/n
$$
to the leaf space 
restricts 
to each fiber as an $n$-fold covering map. 

Thanks to the following Proposition, essentially due to Haefliger
\cite{privcom} 
and Nariman \cite{nariman}, 
it suffices to prove 
\[
(H/\!/S^{1})_{*}\circ i_{*}([L_{n}^{2k-1}])=0
\in H_{2k-1}(\wedge B\overline{\Gamma}_1/\!/S^{1};\Z)
\]
instead of showing $ i_{*}([L_{n}^{2k-1}])=0 
\in H_{2k-1}(BG^{\delta};\Z)$, 
where $H/\!/S^{1}:B\tilde{G}^{\delta}/\!/S^{1} \rightarrow
\wedge B\overline{\Gamma}_1/\!/S^{1}$ is the Borel 
$S^{1}$-quotient map associated with 
the Mather-Thurston map 
$H:B\tilde{G}^{\delta} \rightarrow
\wedge B\overline{\Gamma}_1$ and 
$B\tilde{G}^{\delta}/\!/S^{1}$ is replaced with 
$B{G}^{\delta}$ because they are homotopically equivalent. 

\begin{prop}\label{prop:Borel}
{\rm (i) 
}
The Mather-Thurston map 
$H:B\tilde{G}^{\delta} \rightarrow
\wedge B\overline{\Gamma}_1$ 
in Thurston's Theorem \ref{th:Thurston} is $S^{1}$-equivariant. 
\\
{\rm (ii)} The Borel $S^{1}$-quotient map 
$$H/\!/S^{1}:B\tilde{G}^{\delta}/\!/S^{1} \rightarrow
\wedge B\overline{\Gamma}_1/\!/S^{1}$$ 
associated with 
the Mather-Thurston map $H$ induces isomorphisms between their homology groups. 
\end{prop}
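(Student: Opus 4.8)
The plan is to regard (i) as the geometric input — essentially Haefliger's observation, already invoked in the proof of Proposition \ref{prop:nu} — and to deduce (ii) from (i) together with Thurston's Theorem \ref{th:Thurston} by a standard Serre spectral sequence comparison over $BS^1=\C P^\infty$.

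For (i) I would proceed as follows. The $S^1$-action on $\wedge B\overline{\Gamma}_1$ is the rotation-of-loops action $\theta'$ of Proposition \ref{prop:nu}. The $S^1$-action $\theta$ on $B\tilde{G}^\delta$ is the one produced by the central extension \eqref{eq:ce}: since $\Z$ is central, $B\Z^\delta\simeq S^1\to B\tilde{G}^\delta\to BG^\delta$ is (homotopically) a principal $S^1$-bundle classified by $\chi\in H^2(BG^\delta;\Z)=[BG^\delta,K(\Z,2)]$; equivalently $B\tilde{G}^\delta$ is the homotopy fibre of $\chi$, on which $\Omega K(\Z,2)=S^1$ acts, and this action is free up to homotopy with $B\tilde{G}^\delta/\!/S^1\simeq BG^\delta$ (the identification used in the proof of Theorem \ref{th:other}). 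Passing to adjoints, the $S^1$-equivariance of $H$ is equivalent to the identity $h(\theta_s(x),t)\simeq h(x,t-s)$ (for a fixed sign convention), where $h:B\tilde{G}^\delta\times S^1\to B\overline{\Gamma}_1$ is the Haefliger classifying map of the foliation on the universal flat $S^1$-product over $B\tilde{G}^\delta$. To check this identity, recall that $B\tilde{G}^\delta=B\overline{\mathrm{Diff}}_+S^1$ classifies flat $S^1$-bundles together with a trivialization (a ``framing''), and that $\theta_s$ rotates the framing by $s$. Over a point $x$, the universal flat $S^1$-product restricts to the circle with its framing, and $h(x,-)\colon S^1\to B\overline{\Gamma}_1$ is the classifying loop of that foliated circle read through the framing; rotating the framing by $s$ reparametrizes the fibre circle by the rotation $R_s$, hence replaces the loop $t\mapsto h(x,t)$ by $t\mapsto h(x,t-s)$. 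A convenient model in which to make this rigorous is the suspension $E\tilde{G}^\delta\times_{\tilde{G}}S^1$, together with the contractible-fibre bundle $E\tilde{G}^\delta\times_{\tilde{G}}\R$ covering it: a $\tilde{G}$-equivariant section $E\tilde{G}^\delta\to\R$ trivializes the underlying circle bundle, shifting it by a real number realizes the $S^1=\R/\Z$-action $\theta$, and this shift visibly translates the fibre coordinate; choosing $h$ naturally from this data also makes $H$ genuinely (not merely homotopy-) $S^1$-equivariant, which is what one needs for the Borel construction in (ii) to be literal.

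Granting (i), part (ii) is formal. The $S^1$-equivariant map $H$ yields a map of Borel fibrations over $BS^1=\C P^\infty$,
\[
\bigl(B\tilde{G}^\delta\to B\tilde{G}^\delta/\!/S^1\to BS^1\bigr)\ \xrightarrow{\ (H,\ H/\!/S^1,\ \mathrm{id})\ }\ \bigl(\wedge B\overline{\Gamma}_1\to \wedge B\overline{\Gamma}_1/\!/S^1\to BS^1\bigr).
\]
Both fibres are path-connected ($B\tilde{G}^\delta$ because it is the classifying space of a group, and $\wedge B\overline{\Gamma}_1$ because $B\overline{\Gamma}_1$ is $2$-connected, hence simply connected, so $\pi_0(\wedge B\overline{\Gamma}_1)=0$), and the base $\C P^\infty$ is simply connected, so the homology Serre spectral sequences have untwisted $E^2$-pages $E^2_{p,q}=H_p(\C P^\infty;H_q(\text{fibre}))$. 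By Theorem \ref{th:Thurston}, $H$ induces an isomorphism $H_*(B\tilde{G}^\delta;\Z)\cong H_*(\wedge B\overline{\Gamma}_1;\Z)$, hence an isomorphism of the $E^2$-pages; a morphism of first-quadrant homology spectral sequences that is an isomorphism on $E^2$ is an isomorphism on $E^\infty$ (induct on the page, each bidegree stabilizing after finitely many steps), and, since both spectral sequences converge with a finite filtration in each total degree, induction on the filtration (with the five-lemma) shows that $H/\!/S^1$ induces an isomorphism $H_*(B\tilde{G}^\delta/\!/S^1;\Z)\cong H_*(\wedge B\overline{\Gamma}_1/\!/S^1;\Z)$. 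This is the standard fact that an $S^1$-equivariant map which is a non-equivariant homology isomorphism induces a homology isomorphism on Borel constructions.

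The only real content is step (i), and within it the bookkeeping: fixing a model of $B\tilde{G}^\delta$ in which the central-extension $S^1$-action, the universal flat $S^1$-product, and its classifying map are simultaneously visible, matching that action with the ``rotate the framing'' description, and arranging $H$ to be honestly $S^1$-equivariant. Once this is in place, step (ii) needs no new input beyond Theorem \ref{th:Thurston}.
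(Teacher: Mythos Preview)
Your proposal is correct. For (i) you supply the geometric justification that the paper merely attributes to Haefliger (and Nariman); your suspension-model description is exactly the right way to make the equivariance honest rather than up to homotopy.

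For (ii) you and the paper both deduce the result formally from (i) together with Theorem~\ref{th:Thurston}, but via dual fibrations: the paper uses the principal $S^1$-bundle $S^1\to X\times ES^1\to X/\!/S^1$ (whose total space is homotopy equivalent to $X$) and compares the two Gysin long exact sequences by the five-lemma, whereas you use the Borel fibration $X\to X/\!/S^1\to BS^1$ and compare Serre spectral sequences. Both are standard; the paper's route is marginally more elementary (a long exact sequence plus the five-lemma rather than the full spectral-sequence comparison), while yours makes explicit why no local-coefficient issues arise. Either way, the content is the same: an $S^1$-equivariant homology equivalence induces a homology equivalence on homotopy quotients.
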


\noindent
{\it Proof of Proposition \ref{prop:Borel}} (ii). We enlarge 
the Mather-Thurston map $H$ to 
$H:B\tilde{G}^{\delta}\times ES^1 \rightarrow
\wedge B\overline{\Gamma}_1\times ES^1$.  
Thanks to (i) the enlarged $H$ is still $S^{1}$-equivariant and 
induces isomorphisms between homologies. 
$$
\begin{CD}
S^{1}@>>>B\tilde{G}^\delta\times ES^1@>>>B\tilde{G}^{\delta}/\!/S^{1}
\\
@|@V{H}V{\text{homology iso.}}V@VV{H/\!/S^{1}}V
\\
S^{1}@>>>\wedge B\overline{\Gamma}_1^\delta\times ES^1
@>>>\wedge B\overline{\Gamma}_1/\!/S^{1}
\end{CD}
$$
Comparing the homology Gysin sequences, we see 
$H/\!/S^{1}$ induces isomorphisms between homologies 
because so does $H$. 
This applies to the Borel quotient map associated with 
any $S^{1}$-equivariant mapping which induces isomorphisms between homology groups. 
\qed

In order to investigate the cycle $H/\!/S^{1}\circ i(L_{n}^{2k-1})$, 
we look at $H\circ \tilde{i}(L_n^{2k-1}\tilde{\times} S^1)$ and 
concider where it locates. 
As $H$ is the adjoint of the classifying map 
$$
h:B\tilde{G}^\delta\times S^1\rightarrow B\overline{\Gamma}_1
$$
of foliations of codimension one, we look at 
the foliation on $B\tilde{G}^\delta\times S^1$
induced 
by the projection 
$
B\tilde{G}^\delta\times S^1\rightarrow B\tilde{G}^\delta
$
from the one on $B\tilde{G}^\delta$, 
which gives rise to 
the universal flat $S^1$-product structure over $B\tilde{G}^\delta$, 
and the  foliation on  $(L_n^{2k-1}\tilde{\times} S^1)\times S^1$ 
induced from the one on  $L_n^{2k-1}\tilde{\times} S^1$ by the projection 
 $(L_n^{2k-1}\tilde{\times} S^1)\times S^1
 \rightarrow 
 L_n^{2k-1}\tilde{\times} S^1$.   
 The foliation on  $(L_n^{2k-1}\tilde{\times} S^1)\times S^{1}$ is 
 $S^{1}$-invariant 
 and the leaf space is again identified with $S^1/n$. 
 Therefore it is the pull-back 
 of the point foliation on  $S^1/n$ by 
 $$
 \tilde{f}: (L_n^{2k-1}\tilde{\times} S^1)\times S^{1}\rightarrow S^{1}/n
 $$ 
 and the right action 
 of $S^{1}$ induces the $n$-fold covering map on   $S^1/n$.   
 Hence the classifying map $h\circ \tilde{\tilde{i}}$ of this foliation 
is homotopic to  $\iota\circ\tilde{f}$, 
where   
$\tilde{\tilde{i}}:(L_n^{2k-1}\tilde{\times} S^1)\times S^1\rightarrow 
B\tilde{G}^\delta\times S^1$ 
covers the classifying map $\tilde{i}$ 
and 
$\iota: S^1\rightarrow B\overline{\Gamma}_1$ 
denotes the classifying map of the point foliation on $S^1/n$.
Since $B\overline{\Gamma}_1$ is simply connected (in fact $2$-connected as mentioned already), there exists a mapping
$\tilde{\iota}: D^2\rightarrow B\overline{\Gamma}_1$ which extends the mapping $\iota$ on $\partial D^2=S^1/n$.
Thus we obtain
the following homotopy commutative diagram
$$
\begin{CD}
 (L_n^{2k-1}\tilde{\times} S^1)\times S^1@>{\tilde{f}}>> 
 S^1/n\subset D^{2}
@>{\tilde{\iota}}>>  B\overline{\Gamma}_1
\\
@|@.@|
\\
 (L_n^{2k-1}\tilde{\times} S^1)\times S^1@>{\tilde{\tilde{i}}}>> 
B\tilde{G}^\delta\times S^1 @>{h}>> B\overline{\Gamma}_1
 \\
@VV{\text{right action}}V @VV{\text{right action}}V
\\ 
L_n^{2k-1}\tilde{\times} S^1 @>{\tilde{i}}>>  
B\tilde{G}^\delta @>{H}>> 
\wedge  B\overline{\Gamma}_1(\times ES^{1})
\\
@VV{\text{right action}}V @VV{\text{right action}}V @VV{\text{right action}}V
 \\
 L_n^{2k-1} @>{i}>>  BG^\delta \sim B\tilde{G}^\delta/\!/S^1
@>{H/\!/S^{1}}>>\wedge  B\overline{\Gamma}_1/\!/S^1 .  
\end{CD}
$$

Passing from $\tilde{\iota}\circ\tilde{f}$ 
to the adjoint map $H\circ\tilde{i}$, 
we obtain the following homotopy commutative diagram
$$
\begin{CD}
 L_n^{2k-1}\tilde{\times} S^1 @>>> \wedge D^2 @>>> \wedge B\overline{\Gamma}_1\\
 @V{i}VV   @.  @|\\ 
  B\tilde{G}^\delta @= B\tilde{G}^\delta @>{H}>> \wedge B\overline{\Gamma}_1.
\end{CD}
$$
Every space appearing in this diagram admits a natural $S^1$-action and every map here is 
$S^1$-equivariant. Therefore by taking the Borel construction on each space, we obtain the following homotopy
commutative diagram
$$
\begin{CD}
 (L_n^{2k-1}\tilde{\times} S^1)/S^1=L_n^{2k-1} @>>> \wedge D^2/\!/S^1\overset{\text{h.e.}}{\simeq} BS^1=\mathbb{C}P^\infty @>>> \wedge B\overline{\Gamma}_1/\!/S^1\\
 @V{\tilde{i}}VV   @.  @|\\ 
  B\tilde{G}^\delta/S^1=BG^\delta @= BG^\delta @>{H/\!/S^1}>{\text{homology iso.}}> \wedge B\overline{\Gamma}_1 /\!/S^1.
\end{CD}
$$

Now since $H_{2k-1}(\mathbb{C}P^\infty;\Z)=0$, Proposition \ref{prop:Borel} 
implies that
$i_*([L_n^{2k-1}])=0\in H_{2k-1}(BG^\delta;\Z)$ as required. This completes the proof.
\qed

\begin{remark}
We mention that some part of the above proof is valid in the real analytic case. 
Namely, the classifying map 
$f^\omega:L_n^{2k-1}\tilde{\times} S^1\rightarrow B\overline{\Gamma}_1^\omega$ factors as
$$
L_n^{2k-1}\tilde{\times} S^1 
\xrightarrow[\text{$n$-fold cover}]{\text{fiberwise}}
S^1
\overset{\iota^\omega}{\rightarrow} B\overline{\Gamma}_1^\omega
$$
where $\iota^\omega$ denotes the classifying map for the point foliation on $S^1$
which is real analytic. 
It follows immediately that
$$
f^\omega_*([L_n^{2k-1}\tilde{\times} S^1])=0\in H_{2k}(B\overline{\Gamma}^\omega_1;\Z).
$$
We also have the following.
$$
\tilde{i}^\omega_*([L_n^{2k-1}\tilde{\times} S^1])=0\in H_{2k}(B\tilde{G}^{\omega,\delta};\Z).
$$
To prove this, consider 
the following part of the Gysin exact sequence
$$
H_{2k+1}(BG^{\omega,\delta};\Z)\overset{\cap \chi}{\rightarrow} H_{2k-1}(BG^{\omega,\delta};\Z)
\overset{\mu}{\rightarrow}
H_{2k}(B\tilde{G}^{\omega,\delta};\Z).
$$
Then we have the equality
$$
i^\omega_*([L^{2k+1}_n])\cap \chi=i^\omega_*([L^{2k-1}_n]).
$$
This follows from the argument in the proof of Theorem \ref{th:hatchi} (i) given below.
By the exactness, we can conclude that
$$
\mu(i^\omega_*([L^{2k-1}_n]))=\tilde{i}^\omega_*([L_n^{2k-1}\tilde{\times} S^1])=0\in H_{2k}(B\tilde{G}^{\omega,\delta};\Z).
$$

The important problem is to determine whether the stronger condition
$$
i^\omega_*([L_n^{2k-1}])=0 \ ?\quad \in H_{2k-1}(BG^{\omega,\delta};\Z)
$$
holds or not.
\end{remark}
\vspace{2mm}
\noindent
{\it Proof of Theorem \ref{th:hatchi}}

First we prove $\mathrm{(i)}$.
The restriction of the central extension 
$0\rightarrow \Z\rightarrow \tilde{G}^\delta\rightarrow G^\delta\rightarrow 0$
to the subgroup $\mathrm{SO}(2)_{\text{tor}}\cong\Q/\Z\subset G^\delta$ is
$0\rightarrow \Z\rightarrow \Q\rightarrow \Q/\Z\rightarrow 0$.
The Gysin exact sequence of this central extension is given by
$$
H_{2k+1}(\Q;\Z)=0 \rightarrow H_{2k+1}(\Q/\Z;\Z)\cong\Q/\Z
\overset{\cap i_0^*\chi}{\rightarrow}
H_{2k-1}(\Q/\Z;\Z)\cong\Q/\Z \rightarrow H_{2k}(\Q;\Z)=0
$$
where $i_0: \mathrm{SO}(2)_{\text{tor}}\subset G^\delta$ denotes the inclusion.
Therefore the homomorphism $\cap i_0^*\chi$ is an isomorphism for all $k$.

Let
$0\rightarrow \Z\rightarrow \tilde{\Gamma}\rightarrow \Gamma\rightarrow 0$
be the restriction of the central extension to $\Gamma\subset G^\delta$. Then
we have the following commutative diagram between the Gysin exact
sequences
\begin{equation*}
\begin{CD}
0 @>>> H_{2k+1}(\Q/\Z;\Z)\cong\Q/\Z
 @>{\cap i_0^*\chi}>{\cong}>H_{2k-1}(\Q/\Z;\Z)\cong\Q/\Z  @>>> 0\\
 @VVV  @V{i_*}VV @V{i_*}VV @VVV\\
H_{2k+1}(\tilde{\Gamma};\Z) @>>> H_{2k+1}(\Gamma;\Z)
  @>{\cap i_\Gamma^*\chi}>> H_{2k-1}(\Gamma;\Z)@>>> H_{2k}(\tilde{\Gamma};\Z)
\end{CD}
\label{eq:tor}
\end{equation*}
where $i_\Gamma: \Gamma\subset G^\delta$ denotes the inclusion.
Hence, if the homomorphism $i_*$ on the righthand side is injective (resp. non-trivial),
so is the lefthand side as well. The claim follows from this.

Next we prove $\mathrm{(ii)}$.
By the assumption that $\chi^k=0\in H^{2k}(\Gamma;\Q)$, we can define the
secondary class $\widehat{\chi^k}\in H^{2k-1}(\Gamma;\Q/\Z)$ which is 
well defined modulo the image of the natural homomorphism
$H^{2k-1}(\Gamma;\Q)\rightarrow H^{2k-1}(\Gamma;\Q/\Z)$
(see Definition \ref{def:sc} for details).

On the other hand, $i_0^*\chi^k=0\in H^{2k}(\mathrm{SO}(2)_{\text{tor}};\Q)$ so that
we have also the secondary class $\widehat{i_0^*\chi^k}\in H^{2k-1}(\mathrm{SO}(2)_{\text{tor}};\Q/\Z)$.
By the naturality of the secondary class, this class is equal to 
$i^*\widehat{\chi^k}$. 
Furthermore, all the indeterminacy coming from the rational cohomology
vanishes here 
while the essential part remains so that this class is uniquely defined 
and it gives an
isomorphism
$$
\cap i^*\widehat{\chi^k}: H_{2k-1}(\mathrm{SO}(2)_{\text{tor}};\Z)\cong\Q/\Z
\overset{\cong}{\longrightarrow} \Q/\Z.
$$
Then the required claim follows because of the following identity
$$
\langle i_*(H_{2k-1}(\mathrm{SO}(2)_{\text{tor}};\Z)),\widehat{\chi^k}\rangle
=\langle H_{2k-1}(\mathrm{SO}(2)_{\text{tor}};\Z),i^*\widehat{\chi^k}\rangle
$$
together with the claim of $\mathrm{(i)}$. 
\qed

\begin{remark}
Here we give a sketch of proof of the following fact.
If $B\overline{\Gamma}_1$ were an Eilenberg MacLane space $K(\R,3)$, then we can
compute $H_{*}(B\mathrm{Diff}^\delta_+S^1;\Z)$. In particular,
it has no torsion. More precisely, we show the following.
Let 
$$
\mathrm{GV}: B\overline{\Gamma}_1\rightarrow K(\R,3)
$$
be the classifying map for the Godbillon-Vey class in $H^3(B\overline{\Gamma}_1;\R)$ and let
\begin{equation}
\wedge\mathrm{GV}: \wedge B\overline{\Gamma}_1\rightarrow \wedge K(\R,3)
\label{eq:vgv}
\end{equation}
be the associated map between free loop spaces induced by $\mathrm{GV}$. Now we see that
the extension class of the fibration 
$$
\Omega K(\R,3)=K(\R,2)\rightarrow \wedge K(\R,3)\rightarrow  K(\R,3)
$$
defined in $H^3(K(\R,3);\pi_2(K(\R,2)))\cong \mathrm{Hom}_\Z(\R,\R)$ is trivial
because this fibration has a section. Therefore $\wedge K(\R,3)$ is homotopy equivalent
to the product $K(\R,2)\times K(\R,3)$. If we put this into the map \eqref{eq:vgv}, then
we obtain a mapping
$$
\wedge\mathrm{GV}: \wedge B\overline{\Gamma}_1\rightarrow K(\R,2)\times K(\R,3).
$$
This corresponds to the 
two cohomology classes
$\alpha\in H^2(B\widetilde{\mathrm{Diff}}^\delta_+S^1;\R)$
(the Godbillon-Vey class integrated along the fibers) and
$\beta\in H^3(B\widetilde{\mathrm{Diff}}^\delta_+S^1;\R)$
(the Godbillon-Vey class) under the isomorphism 
$H^*(B\widetilde{\mathrm{Diff}}^\delta_+S^1;\R)\cong H^*(\wedge B\overline{\Gamma}_1;\R)$
induced by Thurston's Theorem \ref{th:Thurston}. 
Therefore the mapping \eqref{eq:vgv} induces a homomorphism
$$
(\wedge GV)_*: H_*(\wedge B\overline{\Gamma}_1;\Z)\rightarrow H_*(K(\R,2)\times K(\R,3);\Z)
\cong S^*_\Z(\R)\otimes_\Z \wedge^*_\Z(\R).
$$
Here in the last term, $S^k_\Z(\R)$ (resp. $\wedge^k_\Z(\R)$) denotes the $k$-th symmetric power over $\Z$
(resp. $k$-th exterior power over $\Z$) of $\R$ which is considered as a $\Q$-vector space.
We remark that the operation over $\Z$ is the same as that over $\Q$ because $\R$ is a
uniquely divisible group. 
Also the degree of the generator $S^1_\Z(\R)=\R$ of $S^{*}_\Z(\R)$ is $2$ while 
the degree of the generator $\wedge ^1_\Z(\R)=\R$ of $\wedge^{*}_\Z(\R)$ is $3$
(see \cite{MR877332} for more details of this computation).

Now we consider the Borel constructions on each space of the mapping \eqref{eq:vgv}. Then
we obtain a morphism of $S^1$-fibrations.
\begin{equation}
\begin{CD}
S^1 @=  S^1 @=  S^1\\
 @VVV  @VVV @VVV \\ 
  B\tilde{G}^\delta @>{H}>> \wedge B\overline{\Gamma}_1 @>>> \wedge K(\R,3)\\
   @VVV  @VVV @VVV \\ 
  B\tilde{G}^\delta /S^1=BG^\delta @>{H/\!/S^1}>> \wedge B\overline{\Gamma}_1/\!/S^1 @>>> \wedge K(\R,3)/\!/S^1.
\end{CD}
\label{eq:s1}
\end{equation}

This induces a homomorphism
\begin{equation}
H_*(BG^\delta;\Z)\overset{\text{Thurston}}{\cong} H^{S^1}_*(\wedge B\overline{\Gamma}_1;\Z)
\rightarrow H^{S^1}_*(\wedge K(\R,3);\Z)
\label{eq:bv}
\end{equation}
which would be an isomorphism if $B\overline{\Gamma}_1$ were $K(\R,3)$.
In general, it is an important and extremely difficult problem to determine
the kernel and cokernel of this homomorphism, both in the $C^\infty$ and 
real analytic categories.

Now we can determine $H^{S^1}_*(\wedge K(\R,3);\Z)$ as follows.
\begin{align*}
H^{S^1}_{2k}(\wedge K(\R,3);\Z)&\cong\Z\oplus Q_{2k}\\
H^{S^1}_{2k+1}(\wedge K(\R,3);\Z)&\cong Q_{2k+1}
\end{align*}
where each $Q_k$ is a certain $\Q$-vector space which can be described explicitly.
For example, the first several terms are given as follows.
\begin{align*}
Q_1&=0,\ Q_2=\R,\ Q_3=0,\ Q_4=S^2_\Z(\R),\ Q_5=\wedge^2_\Z(\R),\\
 Q_6&=S^3_\Z(\R),\
Q_7=S^{2,1}(\R),\ Q_8=S^4_\Z(\R)\oplus \wedge^3_\Z(\R),\
 Q_9=S^{3,1}(\R),\\
 Q_{10}&=S^5_\Z(\R)\oplus S^{2,1,1}(\R),\ Q_{11}=S^{4,1}(\R)
\oplus S^{2,1,1}(\R),\cdots
\end{align*}
Here the symbol $S^{k_1,k_2,\cdots}(\R)$ denotes the $\Q$-vector
space obtained by applying the Schur functor $S^{k_1,k_2,\cdots}\ (k_1\geq k_2\geq\cdots)$
on the $\Q$-vector space $\R$.

This is a purely homotopy theoretical computation using the Gysin sequence applied to the 
most right $S^1$-fibration of the diagram \eqref{eq:s1} together with the 
homology computation of the associated fibration
\[
\wedge K(\R,3)\rightarrow \wedge K(\R,3)/\!/S^1\rightarrow BS^1.
\]
However, it is easier to do this computation if we keep in mind
 geometric properties of the
characteristic classes $\chi$ and $\alpha, \beta$ as well as Schur functors in representation theory
(see \cite{MR1153249}). 
Here we omit the details (see \S 4 Appendix B for the first several cases of 
computations). 
\label{rem:kr3}
\end{remark}

\newpage
\section{Appendix: secondary classes}

Here we briefly describe definitions of secondary classes associated with 
vanishing of powers of the rational Euler class.
We consider the same situation as in the setting of Theorem \ref{th:hatchi}.
Thus let $\Gamma\subset \mathrm{Diff}^\delta_+S^1$ be any subgroup containing 
$\mathrm{SO}(2)$. Assume that $\chi^k=0\in H^{2k}(\Gamma;\Q)$.
Then we can define two secondary classes
\begin{align*}
\widehat{\chi^k}\in H^{2k-1}(\Gamma;\Q/\Z)\\
T\chi^{k}\in H^{2k-1}(\tilde{\Gamma};\Q)
\end{align*}
as follows.
First, choose
\begin{align*}
c&\in Z^2(B\mathrm{Diff}^\delta_+S^1;\Z)\quad \text{such that}\quad  \delta c=0\quad \text{and} \quad [c]=\chi
\\
b& \in C^1(B\widetilde{\mathrm{Diff}}^\delta_+S^1;\Z) \quad \text{such that}\quad  p^*c=\delta b .
\end{align*}
These choinces are
essentially unique (i.e. modulo exact cochains), because we can choose these cochains  at the levels
of $B\mathrm{Diff}_+S^1$ and $B\widetilde{\mathrm{Diff}}_+S^1$ and then 
$H^2(B\mathrm{Diff}_+S^1;\Z)$
is isomorphic to $\Z$ and $B\widetilde{\mathrm{Diff}}_+S^1$ is contractible.

\begin{definition}
$\mathrm{(i)}$\ 
By the assumption $\chi^k=0\in H^{2k}(\Gamma;\Q)$, there exists certain element $a\in C^{2k-1}(\Gamma;\Q)$
such that $c^k|_\Gamma=\delta a$. Set $\bar{a}\in C^{2k-1}(\Gamma;\Q/\Z)$ be the projection of $a$ under the coefficients
projection $\Q\rightarrow \Q/\Z$. Then we have $\delta\bar{a}=\bar{c}^k=0$. Define
$$
\widehat{\chi^k}=[\bar{a}]\in H^{2k-1}(\Gamma;\Q/\Z)
$$
which is well-defined modulo
$$
\mathrm{Image}[H^{2k-1}(\Gamma;\Q)\rightarrow H^{2k-1}(\Gamma;\Q/\Z)].
$$

$\mathrm{(ii)}$\ 
Under the same condition as above, we have
$$
\delta(p^* a -(b\ p^* c^{k-1})|_\Gamma)=p^* (c^k)|_\Gamma - (p^* c\cup  p^* c^{k-1})|_\Gamma)=0.
$$
Define
$$
T\chi^k=[p^* a -(b\ p^* c^{k-1})|_\Gamma]\in H^{2k-1}(\tilde\Gamma;\Q)
$$
which is well-defined modulo
$$
\mathrm{Image}[H^{2k-1}(\Gamma;\Q)\rightarrow H^{2k-1}(\tilde\Gamma;\Q)].
$$
\label{def:sc}
\end{definition}

\begin{bibdiv}
\begin{biblist}

\bib{MR2039760}{incollection}{
      author={Cohen, Ralph~L.},
      author={Jones, John D.~S.},
      author={Yan, Jun},
       title={The loop homology algebra of spheres and projective spaces},
        date={2004},
   booktitle={Categorical decomposition techniques in algebraic topology
  ({I}sle of {S}kye, 2001)},
      series={Progr. Math.},
      volume={215},
   publisher={Birkh\"{a}user, Basel},
       pages={77\ndash 92}
}


\bib{MR662760}{article}{
      author={Dupont, Johan~L.},
      author={Sah, Chih~Han},
       title={Scissors congruences. {II}},
        date={1982},
        ISSN={0022-4049},
     journal={J. Pure Appl. Algebra},
      volume={25},
      number={2},
       pages={159\ndash 195},
         url={https://mathscinet.ams.org/mathscinet-getitem?mr=662760},
}

\bib{MR722374}{article}{
      author={Dupont, Johan~L.},
      author={Sah, Chih~Han},
       title={Corrigendum: ``{S}cissors congruences. {II}'' [{J}. {P}ure
  {A}ppl. {A}lgebra {\bf 25} (1982), no. 2, 159--195
  ]
  },
        date={1983},
        ISSN={0022-4049},
     journal={J. Pure Appl. Algebra},
      volume={30},
      number={2},
       pages={217},
         url={https://mathscinet.ams.org/mathscinet-getitem?mr=722374},
}
\bib{MR1153249}{book}{
      author={Fulton, William},
      author={Harris, Joe},
       title={Representation theory},
      series={Graduate Texts in Mathematics},
   publisher={Springer-Verlag, New York},
        date={1991},
      volume={129},
}

\bib{ghys1}{webpage}{
      author={Ghys, Etienne},
       title={Foliations: What's next after Thurston?},
        date={2014/07/25},
note={a lecture at What’s Next? the mathematical legacy of Bill Thurston (Cornell University)},
}

\bib{ghys2}{webpage}{
      author={Ghys, Etienne},
       title={William Thurston and foliation theory},
        date={2014/09/19},
note={a lecture at MSUT Colloquium (University of Tokyo)},
}

\bib{gs}{article}{
      author={Ghys, Etienne},
      author={Sergiescu, Vlad},
       title={Sur un groupe remarquable de diff\'eomorphismes du cerle},
        date={1987},
     journal={Comm. Math. Helv.},
      volume={62},
       pages={185\ndash 239},
}

\bib{MR100269}{article}{
      author={Haefliger, Andr\'{e}},
       title={Structures feuillet\'{e}es et cohomologie {\`a} valeur dans un
  faisceau de groupo\"{\i}des},
        date={1958},
        ISSN={0010-2571},
     journal={Comment. Math. Helv.},
      volume={32},
       pages={248\ndash 329},
         url={https://mathscinet.ams.org/mathscinet-getitem?mr=100269},
}

\bib{MR0285027}{inproceedings}{
      author={Haefliger, Andr\'{e}},
       title={Homotopy and integrability},
        date={1971},
   booktitle={Manifolds--{A}msterdam 1970 ({P}roc. {N}uffic {S}ummer
  {S}chool)},
      series={Lecture Notes in Mathematics, Vol. 197},
   publisher={Springer, Berlin},
       pages={133\ndash 163},
         url={https://mathscinet.ams.org/mathscinet-getitem?mr=0285027},
}

\bib{privcom}{misc}{
      author={Haefliger, Andr\'{e}},
       title={a private communication.  See also REMARK 7.4 in \cite{MR759480}. },
}

\bib{herman1}{article}{
      author={Herman, Michael~R.},
       title={Sur le groupe des diff\'{e}omorphismes du tore},
        date={1973},
        ISSN={0373-0956},
     journal={Ann. Inst. Fourier (Grenoble)},
      volume={23},
      number={2},
       pages={75\ndash 86},
         url={https://mathscinet.ams.org/mathscinet-getitem?mr=391166},
 }
      
\bib{herman2}{inproceedings}{
      author={Herman, Michael~R.},
       title={Sur le groupe des diff\'{e}omorphismes {${\bf R}$}-analytiques du
  tore},
        date={1975},
   booktitle={Differential topology and geometry ({P}roc. {C}olloq., {D}ijon,
  1974)},
       pages={36\ndash 42. Lecture Notes in Math., Vol. 484},
         url={https://mathscinet.ams.org/mathscinet-getitem?mr=0420696},
}

\bib{MR283817}{article}{
      author={Mather, John~N.},
       title={On {H}aefliger's classifying space. {I}},
        date={1971},
        ISSN={0002-9904},
     journal={Bull. Amer. Math. Soc.},
      volume={77},
       pages={1111\ndash 1115},
         url={https://mathscinet.ams.org/mathscinet-getitem?mr=283817},
}


\bib{MR356085}{article}{
      author={Mather, John~N.},
       title={Integrability in codimension {$1$}},
        date={1973},
        ISSN={0010-2571},
     journal={Comment. Math. Helv.},
      volume={48},
       pages={195\ndash 233},
         url={https://mathscinet.ams.org/mathscinet-getitem?mr=356085},
}

\bib{MR356129}{article}{
      author={Mather, John~N.},
       title={Commutators of diffeomorphisms},
        date={1974},
        ISSN={0010-2571},
     journal={Comment. Math. Helv.},
      volume={49},
       pages={512\ndash 528},
         url={https://mathscinet.ams.org/mathscinet-getitem?mr=356129},
}

\bib{MR95518}{article}{
      author={Milnor, John},
       title={On the existence of a connection with curvature zero},
        date={1958},
        ISSN={0010-2571},
     journal={Comment. Math. Helv.},
      volume={32},
       pages={215\ndash 223},
         url={https://mathscinet.ams.org/mathscinet-getitem?mr=95518},
}

\bib{MR0440554}{book}{
      author={Milnor, John~W.},
      author={Stasheff, James~D.},
       title={Characteristic classes},
   publisher={Princeton University Press, Princeton, N. J.; University of Tokyo
  Press, Tokyo},
        date={1974},
         url={https://mathscinet.ams.org/mathscinet-getitem?mr=0440554},
        note={Annals of Mathematics Studies, No. 76},
}

\bib{MR759480}{article}{
      author={Morita, Shigeyuki},
       title={Nontriviality of the {G}elfand-{F}uchs characteristic classes for
  flat {$S^{1}$}-bundles},
        date={1984},
        ISSN={0030-6126},
     journal={Osaka J. Math.},
      volume={21},
      number={3},
       pages={545\ndash 563},
         url={https://mathscinet.ams.org/mathscinet-getitem?mr=759480},
}
\bib{MR877332}{incollection}{
      author={Morita, Shigeyuki},
       title={Discontinuous invariants of foliations},
       date={1985},
   booktitle={Foliations ({T}okyo, 1983)},
     series={Adv. Stud. Pure Math.},
      volume={5},
   publisher={North-Holland, Amsterdam},
       pages={169\ndash 193},
}



\bib{nariman}{article}{
      author={Nariman, Sam},
       title={On powers of the {E}uler class for flat circle bundles},
        date={2018},
        ISSN={1793-5253},
     journal={J. Topol. Anal.},
      volume={10},
      number={1},
       pages={47\ndash 52},
         url={https://mathscinet.ams.org/mathscinet-getitem?mr=3737509},
}


\bib{MR722372}{article}{
      author={Parry, Walter},
      author={Sah, Chih-Han},
       title={Third homology of {${\rm SL}(2,\,{\bf R})$} made discrete},
        date={1983},
        ISSN={0022-4049},
     journal={J. Pure Appl. Algebra},
      volume={30},
      number={2},
       pages={181\ndash 209},
         url={https://mathscinet.ams.org/mathscinet-getitem?mr=722372},
}

\bib{MR298692}{article}{
      author={Thurston, William},
       title={Noncobordant foliations of {$S^{3}$}},
        date={1972},
        ISSN={0002-9904},
     journal={Bull. Amer. Math. Soc.},
      volume={78},
       pages={511\ndash 514},
         url={https://mathscinet.ams.org/mathscinet-getitem?mr=298692},
}

\bib{MR339267}{article}{
      author={Thurston, William},
       title={Foliations and groups of diffeomorphisms},
        date={1974},
        ISSN={0002-9904},
     journal={Bull. Amer. Math. Soc.},
      volume={80},
       pages={304\ndash 307},
         url={https://mathscinet.ams.org/mathscinet-getitem?mr=339267},
}


\bib{MR3726712}{incollection}{
     author={Tsuboi, Takashi},
       title={Several problems on groups of diffeomorphisms},
        date={2017},
   booktitle={Geometry, dynamics, and foliations 2013},
      series={Adv. Stud. Pure Math.},
      volume={72},
   publisher={Math. Soc. Japan, Tokyo},
       pages={239\ndash 248},
         url={https://mathscinet.ams.org/mathscinet-getitem?mr=3726712},
}

\end{biblist}
\end{bibdiv}

\end{document}